\newcommand{\NN}{\mathbb{N}}
\newcommand{\ZZ}{\mathbb{Z}}
\newcommand{\QQ}{\mathbb{Q}}
\newcommand{\OO}{\mathscr{O}}
\newcommand{\LL}{\mathscr{L}}
\newcommand{\FF}{\mathscr{F}}
\newcommand{\TT}{\mathscr{T}}
\newcommand{\mm}{\mathfrak{m}}
\newcommand{\Aut}{\operatorname{Aut}}
\newcommand{\Gal}{\operatorname{Gal}}
\newcommand{\Hom}{\operatorname{Hom}}
\newcommand{\sep}{\operatorname{sep}}
\newcommand{\Spec}{\operatorname{Spec}}
\newcommand*{\longhookrightarrow}{\ensuremath{\lhook\joinrel\relbar\joinrel\rightarrow}}
\theoremstyle{plain}
\newtheorem{thm}{Theorem}[section]	
\newtheorem{prop}[thm]{Proposition}
\newtheorem{lem}[thm]{Lemma}
\newtheorem*{thm*}{Theorem}
\theoremstyle{definition}
\newtheorem{rem}[thm]{Remark}
\newtheorem{ex}[thm]{Example}
\newtheorem*{nota*}{Notation}
\theoremstyle{remark}
\newtheorem*{bem*}{Bemerkung}
\begin{document}
	
	\title{The torsion in the cohomology of wild elliptic fibers}
	
	\author{Leif Zimmermann}
	\address{Mathematisches Institut, Heinrich-Heine-Universität, Düsseldorf, 40204 Düsseldorf, Germany}
	\email{zimmermann@math.uni-duesseldorf.de}
	
	
	\subjclass[2010]{14D06, 14H52, 14J27, 11G07}
	
	\date{\today}
	
	
	
	\begin{abstract}
		Given an elliptic fibration $f \colon X \to S$ over the spectrum of a complete discrete valuation ring with algebraically closed residue field, we use a Hochschild--Serre spectral sequence to express the torsion in $R^1f_\ast \OO_X$ as the first group cohomology $H^1(G,H^0(S^\prime, \OO_{S^\prime}))$. Here, $G$ is the Galois group of the maximal extension $K^\prime / K$ such that the normalization of $X \times_S S^\prime$ induces an \'etale covering of $X$, where $S^\prime$ is the normalization of $S$ in $K^\prime$. The case where $S$ is a Dedekind scheme is easily reduced to the local case. Moreover, we generalize to higher-dimensional fibrations.
	\end{abstract}
	
	\maketitle
	\tableofcontents

\section*{Introduction}

Elliptic fibrations play an eminent role in the classification of algebraic surfaces. They are defined as regular integral surfaces $X$ together with a surjective, proper morphism $f$ to a regular curve $S$ such that $f_\ast \OO_X = \OO_S$ holds and the generic fiber is a smooth curve of genus one.
If $f \colon X \to S$ is an elliptic fibration, we can write $R^1 f_\ast \OO_X$ as a sum of an invertible sheaf $\LL$ and a torsion module $\TT$. The latter one is only non-trivial at finitely many points. In characteristic zero, it even vanishes completely. Yet, in positive characteristic, this is not the case in general. Fibers over closed points $s \in S$ such that $\TT_s$ is non-trivial are called \emph{wild fibers}. They are obstructions for the cohomological flatness of $f$. We examine the structure of this torsion module $\TT$ in this work as follows:

First, we introduce the notation in the first subsection of Section~\ref{Sec:Basics} and argue that we may localize at some closed point $s \in S$ and complete at its stalk. Hence, we may assume $S$ to be the spectrum of a complete discrete valuation ring. Furthermore, we show that $H^1(X,\OO_X)$ depends only on the generic fiber of an elliptic fibration $f \colon X \to S$. These results also hold for a natural generalization of the notion of elliptic fibration we call \emph{abelian fibration}, that is, a fibration with regular total space such that the generic fiber is a torsor under an abelian variety. In the second subsection, we recall the well-known possible reduction types an elliptic fibration or more generally the N\'eron model of an abelian variety may have. Afterwards, we introduce the main technique we are going to use in the third subsection. Before we start studying the first cohomlogy group of an elliptic fibration in Section~\ref{Sec:ApplGoodMultReduction}, we need to study maximal \'etale coverings of a fibration in Section~\ref{Sec:MaximalEtaleCovers}. Therein, we recall the notion of the maximal base field of \'etale coverings of the total space from \cite{Mitsui2015}. It is obtained by composing all finite field extensions $K^\prime$ of the function field $K$ of $S$ with the following property: If $S^\prime$ denotes the normalization of $S$ in $K^\prime$, the map $X^\prime \to X \times_S S^\prime \to X$ is \'etale, where $X^\prime \to X \times_S S^\prime$ is the normalization. As a first result, we show that the base field of \'etale coverings only depends on the generic fiber for abelian fibrations, and obtain:

\begin{thm*}
(See \ref{Thm:TorsionNormalFree}) Let $f \colon X \to S$ be an abelian fibration with $S$ being the spectrum of a complete discrete valuation ring $R$ with algebraically closed residue field. Denote by $M = M_{X/K}$ the maximal base field of \'etale coverings of the total space and by $R^\prime$ the integral closure of $R$ in $M$. Assume that $X^\prime$, the normalization of the base change of $X$ to $R^\prime$, has free $H^1(X^\prime, \OO_{X^\prime})$.
Then the torsion in $H^1(X,\OO_X)$ is canonically isomorphic to $H^1(G, R^\prime)$, where $G$ is the Galois group of $M/K$.
\end{thm*}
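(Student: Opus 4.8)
The plan is to exploit the finite \'etale Galois cover $\pi \colon X' \to X$ with group $G = \Gal(M/K)$ supplied by the maximal base field $M$, and to feed it into a Cartan--Leray (Hochschild--Serre) spectral sequence for coherent cohomology. Since $\pi$ is finite \'etale and Galois with group $G$, the unit $\OO_X \to \pi_\ast \OO_{X'}$ identifies $\OO_X$ with the invariants $(\pi_\ast \OO_{X'})^G$, and because $\pi$ is affine we have $H^q(X', \OO_{X'}) = H^q(X, \pi_\ast \OO_{X'})$ as $G$-modules. Composing the $G$-invariants functor with the derived functors of global sections then yields a spectral sequence
\[
E_2^{p,q} = H^p\bigl(G, H^q(X', \OO_{X'})\bigr) \Longrightarrow H^{p+q}(X, \OO_X).
\]

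First I would extract the five-term exact sequence in low degrees,
\[
0 \to H^1\bigl(G, H^0(X', \OO_{X'})\bigr) \to H^1(X, \OO_X) \xrightarrow{\ \alpha\ } H^1(X', \OO_{X'})^G \to H^2\bigl(G, H^0(X', \OO_{X'})\bigr).
\]
Using that $X' \to S'$ is again an abelian fibration (so that its structure sheaf has the expected global sections) together with $S' = \Spec R'$, I would identify $H^0(X', \OO_{X'}) = R'$, turning the sequence into a canonical injection $H^1(G, R') \hookrightarrow H^1(X, \OO_X)$ whose cokernel embeds into $H^1(X', \OO_{X'})^G$. This already realises $H^1(G, R')$ as a canonical submodule of $H^1(X, \OO_X)$ via the inflation edge map; it remains only to match it with the torsion submodule.

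The identification with the torsion rests on two complementary facts. On one hand, $G$ is finite, so multiplication by $\lvert G \rvert$ annihilates $H^i(G, -)$ for every $i \geq 1$; hence $H^1(G, R')$ is torsion and its image under the injection lies in the torsion submodule of $H^1(X, \OO_X)$. On the other hand, the freeness hypothesis on $H^1(X', \OO_{X'})$ is precisely what forces the reverse inclusion: a submodule of a module that is free over the principal ideal domain $R$ (note $R'$ is finite over $R$, so freeness over $R'$ gives freeness over $R$) is torsion-free, so $H^1(X', \OO_{X'})^G$ is a torsion-free $R$-module and the $R$-linear map $\alpha$ kills every torsion element. Consequently the torsion of $H^1(X, \OO_X)$ lands in $\ker \alpha = H^1(G, R')$, and combining the two inclusions gives the asserted canonical isomorphism between the torsion and $H^1(G, R')$.

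The main obstacle I anticipate is not the homological bookkeeping but the two geometric inputs that feed the spectral sequence: establishing the Cartan--Leray sequence for coherent sheaves along $\pi$ (equivalently, that $\pi$ is genuinely a $G$-torsor, so that $\OO_X = (\pi_\ast \OO_{X'})^G$ and invariants compute degree-zero cohomology) and pinning down $H^0(X', \OO_{X'}) = R'$ for the normalization $X'$. If $M/K$ is only infinite algebraic, one must further replace $G$ by the profinite group $\Gal(M/K)$ and pass to the colimit of the finite subcovers, reducing the torsion statement to the finite levels where the annihilation-by-$\lvert G \rvert$ argument applies. The freeness assumption on $H^1(X', \OO_{X'})$ does the essential work of separating the free and torsion parts, so that once the spectral sequence and the two identifications are in place, the conclusion follows formally.
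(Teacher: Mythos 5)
Your proposal is correct and takes essentially the same approach as the paper's own argument (Proposition~\ref{Prop:TorsionInEtaleCase} applied to the covering $X^\prime \to X$): the identical five-term exact sequence of the Hochschild--Serre spectral sequence, the identification $H^0(X^\prime,\OO_{X^\prime})=R^\prime$, and the same two-sided torsion argument, with finiteness of $G$ giving one inclusion and freeness of $H^1(X^\prime,\OO_{X^\prime})$ giving the other. The only cosmetic differences are that the paper obtains the spectral sequence from \'etale cohomology together with the \'etale--Zariski comparison for quasi-coherent sheaves (citing Milne) rather than your direct coherent Cartan--Leray construction, and that your concern about $M/K$ being infinite is moot, since Proposition~\ref{Prop:EtaleBaseField} already guarantees that $M/K$ is finite and Galois.
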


Now most of Section~\ref{Sec:ApplGoodMultReduction} is devoted to the question in which cases the cohomology group $H^1(X^\prime, \OO_{X^\prime})$ is free. With notation as in the above theorem, this will be the case under the following assumptions (see Theorem~\ref{Thm:TorsionGoodCase} and Theorem~\ref{Thm:TorsionMultCase}):
\begin{enumerate}
	\item $A_K$ is an abelian variety with good reduction and $X_K(M) \neq \varnothing$,
	\item $A_K$ is an elliptic curve with ordinary good reduction,
	\item $A_K$ is an elliptic curve with multiplicative reduction.
\end{enumerate}

Now assume that $A_K$ does not have semi-stable reduction. We can potentially reduce the situation to the above cases by applying the theory of semi-stable reduction: One always finds an extension $K^\prime / K$ such that $A_{K^\prime}$ has semi-stable reduction. If $S$ is the spectrum of a complete discrete valuation ring with separably closed residue field, there is a minimal extension among all extensions $K^\prime$ such that $A_{K^\prime}$ becomes semi-stable, and its degree is prime to the residue characteristic $p$ for $p > 2g+1$, where $g = \dim A_K$, cf. \cite{Conrad:SemStabRedForAbelVar}, Theorem~6.8. We use this in Section~\ref{Sec:AddReduction} to obtain:

\begin{thm*}
(See \ref{Thm:AdditiveReduction}) Let $A_K$ be an abelian variety that attains semi-stable reduction over a Galois extension $K^\prime/K$, with Galois group $G$ of order prime to the residue characteristic $p$. Assume that either $A_{K^\prime}$ has good reduction or that $A_K$ is an elliptic curve. If $X$ is an abelian fibration over $S$ such that the generic fiber $X_K$ is a torsor under $A_K$, then
\[
H^1(X, \OO_X) = H^1(X^\prime, \OO_{X^\prime})^G,
\]
where $X^\prime$ is any regular proper model of $X_{K^\prime}$ over the integral closure $S^\prime$ of $S$ in $K^\prime$.
\end{thm*}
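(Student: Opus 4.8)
The plan is to recover $X$ as a quotient of a regular model over $S'$ by the tame group $G$, and to let the hypothesis that $|G|$ is prime to the residue characteristic $p$ collapse the resulting group cohomology. The essential point is that $|G|$ is then a unit in $R$ (and in $R'$), so that $H^i(G, N) = 0$ for every $R$-module $N$ and every $i > 0$, by the usual transfer argument. The role of the remaining hypothesis --- that $A_{K'}$ has good reduction or that $A_K$ is an elliptic curve --- is to guarantee a regular model over $S'$ on which $G$ acts, via the functorial minimal regular model in the curve case and the smooth model in the good-reduction case; the semi-stable reduction assumption itself serves to make the right-hand side computable through Theorem~\ref{Thm:TorsionGoodCase} and Theorem~\ref{Thm:TorsionMultCase}, but is not needed for the identity as such.

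First I would form $\tilde X$, the normalization of $X \times_S S'$. The Galois action of $G$ on $S'$ over $S$ lifts to $\tilde X$, and the induced finite morphism $\tilde X / G \to X$ is birational --- on generic fibers it is the quotient $X_{K'}/G = X_K$ --- so, $X$ being regular and hence normal, it is an isomorphism $\tilde X/G \cong X$. This presents $X$ literally as the quotient of the normal model $\tilde X$, and reduces the theorem to comparing the cohomology of $\tilde X$ with that of a regular model.

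Next I would run the Hochschild--Serre spectral sequence for the $G$-action along $\pi \colon \tilde X \to X$. Since $\pi$ is finite we have $R^i \pi_* \OO_{\tilde X} = 0$ for $i > 0$ and $(\pi_* \OO_{\tilde X})^G = \OO_X$, so the sequence reads
\[
E_2^{p,q} = H^p\bigl(G, H^q(\tilde X, \OO_{\tilde X})\bigr) \Longrightarrow H^{p+q}(X, \OO_X).
\]
The coefficient groups $H^q(\tilde X, \OO_{\tilde X})$ are $R'$-modules, hence $R$-modules on which $|G|$ acts invertibly, so $E_2^{p,q} = 0$ for $p > 0$ and the sequence degenerates to $H^n(X, \OO_X) = H^n(\tilde X, \OO_{\tilde X})^G$ for all $n$. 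It then remains to replace $\tilde X$ by the regular model $X'$. Because $S' \to S$ is tamely ramified of degree prime to $p$ and $X$ is regular, the singularities of $\tilde X$ are tame and of quotient type, hence rational; so any $G$-equivariant resolution $Z \to \tilde X$ satisfies $H^1(Z, \OO_Z) = H^1(\tilde X, \OO_{\tilde X})$. As $Z$ is a regular proper model of $X_{K'}$ over $S'$, the generic-fiber invariance of $H^1$ established in Section~\ref{Sec:Basics} identifies $H^1(Z, \OO_Z)$ with $H^1(X', \OO_{X'})$ for any regular proper model $X'$, and canonically in $X_{K'}$, hence $G$-equivariantly. Combining the identifications yields $H^1(X, \OO_X) = H^1(X', \OO_{X'})^G$.

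The main obstacle I expect is the third step: one must know that tame quotient singularities are rational in the present generality --- which may be mixed characteristic and, for higher-dimensional abelian fibrations, of dimension greater than two --- and one must produce a resolution compatible with the $G$-action so that the comparison isomorphisms respect $G$-invariants. In the surface (elliptic) case this is secured by the canonicity of the minimal resolution; in the good-reduction case I would instead exploit the smooth model, where $\tilde X$ is already mild enough that little or no resolution is required. Checking that the generic-fiber invariance of Section~\ref{Sec:Basics} is natural enough to be $G$-equivariant is a routine but necessary verification.
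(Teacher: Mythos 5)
Your first two steps are sound and, in substance, parallel the paper's own technique: the identification $X \cong \tilde{X}/G$ is correct, and the degeneration of the two spectral sequences of \cite{Grothendieck:Tohoku}, Th\'eor\`eme~5.2.2 (which is exactly what the paper uses) does give $H^n(X,\OO_X)=H^n(\tilde{X},\OO_{\tilde{X}})^G$ once $\lvert G \rvert$ is invertible. The genuine gap is your third step: the claim that $\tilde{X}$, the normalization of $X\times_S S^\prime$, has tame quotient, hence rational, singularities. This confuses the two directions in which tameness helps: rationality \emph{descends} along tame quotients (Boutot resp.\ Heitmann--Ma, as the paper uses), but it does not \emph{ascend} along tame covers, and $\tilde{X}$ is a tame cover of the regular scheme $X$, ramified along the special fiber, which in the additive-reduction cases of interest is not a normal crossings divisor. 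Concretely: let $X$ be a minimal elliptic fibration with a Kodaira type II fiber (cuspidal cubic), $p>3$, and let $K^\prime = K(\pi^{1/6})$ be the degree-$6$ extension over which the Jacobian acquires good reduction. Locally at the cusp the special fiber has equation $y^2-x^3$ up to a unit, so $\tilde{X}$ is locally $\{z^6=y^2-x^3\}$, a simple elliptic singularity of type $\tilde{E}_8$: normal, but not rational (its geometric genus is $1$; resolving it is precisely where the smooth elliptic fiber of the semi-stable model appears). Types III and IV similarly produce $\tilde{E}_7$ and $\tilde{E}_6$ singularities.

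As a consequence, your asserted $G$-equivariant identification $H^1(\tilde{X},\OO_{\tilde{X}})\cong H^1(X^\prime,\OO_{X^\prime})$ fails: for the minimal (automatically $G$-equivariant) resolution $g\colon Y^\prime\to \tilde{X}$, the Leray sequence gives an exact sequence $0\to H^1(\tilde{X},\OO_{\tilde{X}})\to H^1(Y^\prime,\OO_{Y^\prime})\to H^0(\tilde{X},R^1g_\ast\OO_{Y^\prime})\to 0$ with nonzero last term, so $H^1(\tilde{X},\OO_{\tilde{X}})$ is a \emph{proper} submodule of $H^1(Y^\prime,\OO_{Y^\prime})=H^1(X^\prime,\OO_{X^\prime})$. (Their $G$-invariants do coincide, but that is the content of the theorem; your argument presupposes it rather than proving it.) The repair is the paper's route: run the quotient spectral sequence not on $\tilde{X}\to X$ but on $Y^\prime\to Y:=Y^\prime/G$. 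Since $Y^\prime$ is regular and $\lvert G\rvert$ is prime to $p$, Boutot resp.\ Heitmann--Ma shows that $Y$ has pseudo-rational singularities, so Proposition~\ref{Prop:InvariancePseudoRational} identifies $H^1(Y,\OO_Y)=H^1(X,\OO_X)$ and $H^1(Y^\prime,\OO_{Y^\prime})=H^1(X^\prime,\OO_{X^\prime})$; the scheme $\tilde{X}$ itself never enters. Note also that in the higher-dimensional good-reduction case you cannot invoke ``any $G$-equivariant resolution'' (none is known to exist in positive or mixed characteristic); the paper instead constructs the regular model $X^\prime$ and its pseudo-rational quotient $X$ directly as quotients of the abelian scheme $A_{S^{\prime\prime}}$, sidestepping resolution entirely.
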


Note that a regular model $X^\prime$ of $X_{K^\prime}$ as mentioned in the above theorem exists in the case of good reduction for arbitrary dimensions due to \cite{Liu+Lorenzini+Raynaud}, Proposition~8.1 resp.\ \cite{Lewin-Menegaux:ModelesMinimaux}.
The groups $H^1(G, R^\prime)$ which we identify with the torsion in $H^1(X, \OO_X)$ are quite accessible in terms of the jumps of higher ramification groups: In the last section, we apply a formula by Sen \cite{Sen:AutoLocFields} for $H^1(G,R^\prime)$ in the case of elliptic fibrations where the residue field is algebraically closed.\\

\textbf{Acknowledgment.} This work is part of my PhD thesis and I wish to express my gratitude to my advisor Stefan Schröer. For financial support from the project ``SCHR 671/6-1 Enriques-Mannigfaltigkeiten'', I would like to thank the Deutsche Forschungsgemeinschaft. It also funds the research training group \emph{GRK~2240: Algebro-geometric Methods in Algebra, Arithmetic and Topology}, in whose framework the research was conducted. Furthermore, I am indebted to the anonymous referee for greatly improving the readability of this article.

\section{Basics and reduction to the local case} \label{Sec:Basics}

The main goal of this article is to study the torsion in $R^1 f_\ast \OO_X$ for an elliptic fibration $f \colon X \to S$. In the first subsection, we will start by defining fibrations over Dedekind schemes, see how the torsion in the higher direct images arises and that the study of its structure is essentially local. Moreover, under the condition of $X$ being regular, we will see that the higher direct images only depend on the generic fiber. Finally, we fix the notation.

In the next subsection, we will come closer to our prior objects of interest - elliptic and abelian fibrations - and recapitulate the theory of models and reduction type of abelian varieties.

The last subsection introduces the main technique for getting insight into the torsion structure.

\subsection{Reduction to the local case}

We will denote by $S$ a \emph{Dedekind scheme}, i.e.\ a connected, noetherian, normal scheme of dimension one, with generic point $\eta$ and function field $K$. Given a normal, integral scheme $X$, we call a morphism $f \colon X \to S$ a \emph{fibration} if $f$ is proper and $\OO_S = f_\ast \OO_X$ holds. Because $S$ is a Dedekind scheme, we may decompose the higher direct images $R^i f_\ast \OO_X$ into the sum of a locally free sheaf $\LL_i$ of rank $n_i = \dim_K H^i(X_\eta, \OO_{X_\eta})$ and some torsion sheaf $\TT_i$ supported at finitely many points: The sheaf $R^i f_\ast \OO_X$ is coherent due to the properness of $f$. 
As cohomology commutes with flat base change, there is a canonical isomorphism $(R^i f_\ast \OO_X)_\eta \to H^i(X_\eta, \OO_{X_\eta}) \simeq \OO_{S,\eta}^{\oplus n_i}$ of $\OO_{S,\eta}$-vector spaces. This isomorphism extends to an open subset of $S$, 
which is also dense due to the irreducibility of $S$. Thus, $R^i f_\ast \OO_X$ is free of rank $n_i$ except at finitely many closed points. After localizing with respect to such a point, $R^i f_\ast \OO_X$ becomes a finitely generated module over a local Dedekind ring, i.e.\ a discrete valuation ring. By the structure theorem for finitely generated modules over discrete valuation rings, it must be isomorphic to the sum of a free part and a torsion part.

So to understand $\TT_i$, it suffices to understand $(\TT_i)_s$ over closed points $s \in S$. To study this module, we will base change by $\Spec(\OO_{S,s})$. As the invariant factors of $(\TT_i)_s$ do not change when completing the stalk at $s$, we will even restrict our problem to the case where $S$ is the spectrum of a complete discrete valuation ring. This is fine as the base change $X \times_S \Spec(\hat{\OO}_{S,s}) \to \Spec(\hat{\OO}_{S,s})$ is again a fibration.

So let $S$ be the spectrum of a complete discrete valuation ring. As a further reduction step, we consider fibrations $f \colon X \to S$ where $X$ is assumed to have \emph{pseudo-rational singularities}, i.e.\ we additionally impose that $X$ is Cohen-Macaulay, admits a dualizing complex and that for every normal scheme $Y$ and projective, birational morphism $g \colon Y \to X$, the map between dualizing complexes $\varsigma_g \colon g_\ast \omega_Y \to \omega_X$ is an isomorphism (see \cite{Kovacs:RationalSingularities}, Definition~9.4). Regular schemes have pseudo-rational singularities. We say that a fibration $f \colon X \to S$ has pseudo-rational singularities if $X$ has.

\begin{prop} \label{Prop:InvariancePseudoRational}
	Let $f \colon X \to S$ be a fibration with pseudo-rational singularities. Then the cohomology groups $H^i(X, \OO_X)$ for $i \ge 0$ depend only on the generic fiber, that is, if $g \colon Y \to S$ is another fibration with pseudo-rational singularities such that $Y_\eta \simeq X_\eta$, then $H^i(X,\OO_X) \simeq H^i(Y, \OO_Y)$ for all $i \ge 0$.
\end{prop}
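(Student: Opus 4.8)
The plan is to exploit that $X$ and $Y$, having isomorphic generic fibers, are two proper $S$-models sharing a common function field $L = \kappa(X_\eta) = \kappa(Y_\eta)$, and to compare each of them against a single normal model dominating both. After the reductions already made we may assume $S = \Spec R$ with $R$ a complete discrete valuation ring. First I would form the closure $\Gamma$ of the graph of the birational map $X \dashrightarrow Y$ inside $X \times_S Y$ (it is the closure of the diagonal $\Spec L \hookrightarrow X_\eta \times_K Y_\eta$), and let $W$ be its normalization. Then $W$ is an integral normal scheme, proper over $S$, with function field $L$, and the two projections furnish proper birational morphisms $p \colon W \to X$ and $q \colon W \to Y$; after composing with a suitable blow-up supplied by Chow's lemma and renormalizing, I may assume $p$ and $q$ are projective. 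Since $W$ is normal, the pseudo-rationality hypotheses on $X$ and on $Y$ apply to $p$ and $q$ respectively.

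The heart of the argument is the claim that $Rp_\ast \OO_W \cong \OO_X$ in the derived category, and likewise $Rq_\ast\OO_W \cong \OO_Y$. I would deduce this from pseudo-rationality by Grothendieck--Serre duality. Both $X$ and $W$ admit dualizing complexes, $X$ is Cohen--Macaulay, and $\omega_W^\bullet = p^! \omega_X^\bullet$, so for the proper morphism $p$ duality gives a natural isomorphism
\[
Rp_\ast R\HHom_W(\mathscr F, \omega_W^\bullet) \;\cong\; R\HHom_X(Rp_\ast \mathscr F, \omega_X^\bullet).
\]
Taking $\mathscr F = \omega_W^\bullet$ and using that $R\HHom_W(\omega_W^\bullet,\omega_W^\bullet) \cong \OO_W$ (biduality for the dualizing complex), the left-hand side is $Rp_\ast\OO_W$, while the right-hand side is $R\HHom_X(Rp_\ast \omega_W^\bullet, \omega_X^\bullet)$. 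Now pseudo-rationality provides that the trace map $Rp_\ast\omega_W^\bullet \to \omega_X^\bullet$ is a quasi-isomorphism; substituting, the right-hand side collapses to $R\HHom_X(\omega_X^\bullet,\omega_X^\bullet) \cong \OO_X$. Hence $Rp_\ast\OO_W \cong \OO_X$, and symmetrically $Rq_\ast\OO_W \cong \OO_Y$.

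To conclude, I would push forward to the affine base. Writing $h \colon W \to S$ for the structure morphism, we have $h = f \circ p = g \circ q$, so $Rh_\ast\OO_W \cong Rf_\ast(Rp_\ast\OO_W) \cong Rf_\ast\OO_X$ and likewise $Rh_\ast\OO_W \cong Rg_\ast\OO_Y$. Because $S$ is affine, taking cohomology yields $H^i(W,\OO_W) \cong H^i(X,\OO_X)$ and $H^i(W,\OO_W)\cong H^i(Y,\OO_Y)$ for every $i \ge 0$, whence $H^i(X,\OO_X) \cong H^i(Y,\OO_Y)$; since the identifications are induced by the pullback maps $\OO_X \to Rp_\ast\OO_W$ and $\OO_Y \to Rq_\ast\OO_W$, they are canonical. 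The main obstacle is the middle step: extracting the derived-category isomorphism $Rp_\ast\OO_W \cong \OO_X$ from the pseudo-rationality hypothesis. If that hypothesis is read only at the level of dualizing sheaves, $p_\ast\omega_W \cong \omega_X$, one must still control the higher direct images $R^sp_\ast$ and the failure of $W$ to be Cohen--Macaulay in order to upgrade the trace map to a quasi-isomorphism of complexes; this is precisely the content of the theory of pseudo-rational singularities in \cite{Kovacs:RationalSingularities}, which I would invoke. A secondary, routine point is ensuring that $p$ and $q$ can be taken projective, so that the stated form of the hypothesis applies.
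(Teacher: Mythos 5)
Your opening construction coincides with the paper's: close up the graph of the isomorphism $X_\eta \simeq Y_\eta$ inside $X \times_S Y$ to obtain a scheme mapping properly and birationally to both $X$ and $Y$. At that point the paper simply invokes \cite{Kovacs:RationalSingularities}, Theorem~1.6, which is precisely the statement that two properly birational schemes with pseudo-rational singularities, proper over a common base, have isomorphic higher direct images of their structure sheaves. You instead attempt to reprove this comparison by Grothendieck duality, and that is where your argument has a genuine gap.

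The gap is the assertion that ``pseudo-rationality provides that the trace map $Rp_\ast \omega_W^\bullet \to \omega_X^\bullet$ is a quasi-isomorphism.'' The definition used in the paper (\cite{Kovacs:RationalSingularities}, Definition~9.4) gives much less: for a projective birational morphism $p \colon W \to X$ with $W$ normal, it asserts only that the map of dualizing \emph{sheaves} $p_\ast \omega_W \to \omega_X$ is an isomorphism, i.e.\ it controls a single cohomology sheaf of $Rp_\ast \omega_W^\bullet$ in a single degree. To upgrade this to the derived statement you would additionally need (a) that $W$ is Cohen--Macaulay, so that $\omega_W^\bullet$ is a shifted sheaf --- but your $W$ is merely a normalization, and normal does not imply Cohen--Macaulay once $\dim X \ge 3$, which matters here because the proposition is applied to abelian fibrations of arbitrary dimension; and (b) Grauert--Riemenschneider-type vanishing $R^i p_\ast \omega_W = 0$ for $i > 0$, which fails in positive characteristic --- the only characteristic in which the torsion studied in this paper is nonzero. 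Without these inputs the intermediate claim $Rp_\ast \OO_W \simeq \OO_X$ does not follow: nothing in your hypotheses rules out $R^i p_\ast \OO_W \neq 0$ for a merely normal modification $W$ (the fact that this cannot happen is special to surfaces, where singularities sandwiched between regular ones are rational; in higher dimension there is no such theorem). Your closing remark that the missing step ``is precisely the content of the theory of pseudo-rational singularities'' in \cite{Kovacs:RationalSingularities} acknowledges the hole but does not fill it: the statement you would have to import from that paper is essentially Theorem~1.6 itself, whose proof does not proceed through your claim for the normalized graph; and once that theorem is cited, the entire duality detour through $W$ is superfluous --- citing it is, in effect, the paper's whole proof.
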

\begin{proof}
	Given another fibration $g \colon Y \to S$ with pseudo-rational singularities and isomorphism $\iota \colon X_\eta \to Y_\eta$, we take the graph $\Gamma$ of $\iota$ and its closure $Z$ in $X \times_S Y$ with reduced structure sheaf. The restriction of the projections to $Z$ gives proper birational morphisms $Z \to X$ and $Z \to Y$, i.e.\ $X$ and $Y$ are \emph{properly birational}. Hence, we may apply \cite{Kovacs:RationalSingularities}, Theorem~1.6, to see that $R^if_\ast \OO_X  \simeq R^i g_\ast \OO_Y$ holds. As $S$ is affine, we have $R^if_\ast \OO_X = H^i(X, \OO_X)$ and the same for $Y$, proving the claim.
\end{proof}

Therefore, if $X$ is regular like in the case of elliptic fibrations, it is natural to ask for a description of the torsion in $H^1(X,\OO_X)$ in terms of the generic fiber. To ease notation, we will from now on \emph{always assume that $S$ is the spectrum of a complete discrete valuation ring~$R$}, with generic point $\eta$, function field $K$, closed point $s$ and \emph{separably closed} residue field $k$ if not explicitly stated otherwise.

\subsection{Elliptic and Abelian fibrations}

An \emph{elliptic fibration} $f \colon X \to S$ is a fibration with $X$ regular such that the generic fiber $X_\eta$ is a smooth curve of genus one. The smoothness condition on $X_\eta$ is equivalent to the fact that $X_\eta$ is a torsor under an elliptic curve, namely its Jacobian.
We therefore generalize this notion and call a fibration $f \colon X \to S$ with $X$ regular an \emph{abelian fibration} if $X_\eta$ is a torsor under an abelian variety.

Recall that for an abelian variety $A_K$ over $K$, an $A_K$-torsor is a separated scheme $X_K$ of finite type over $K$ equipped with an $A_K$-action such that there exists an equivariant isomorphism to $A_K$ after some field extension. If such an isomorphism exists over $K^\prime$, we say that $X_K$ \emph{splits} over $K^\prime$.

According to Proposition~\ref{Prop:InvariancePseudoRational}, the torsion in $H^1(X,\OO_X)$ of an abelian fibration $f \colon X \to S$ only depends on the generic fiber. Hence, it is natural to introduce the notion of a model of an abelian variety. In particular, the N\'eron model will be important to us, as we will make a case-by-case analysis according to the closed fiber of the N\'eron model.

Given a proper smooth connected scheme $X_K$ over a field $K$, we call a scheme $X$ over $S$ a \emph{model of $X_K$ over $S$} if $X$ is normal and flat, of finite type over $S$ together with an isomorphism $X_\eta \simeq X_K$ which we will often omit. Fibrations are models of their generic fiber. A \emph{N\'{e}ron model of $X_K$ over $S$} is a model $N$ smooth and separated over $S$ satisfying the following universal property:
\begin{center}
	For each smooth $S$-scheme $Y$ and each $K$-morphism $u_\eta \colon Y_\eta \to N_\eta$,\\
	there is a unique $S$-morphism $u \colon Y \to N$ extending $u_\eta$.
\end{center}
The latter property is also called the \emph{N\'{e}ron mapping property}, which makes the N\'eron model unique up to unique isomorphism. The N\'{e}ron model $N$ of an abelian variety always exists (cf.\ \cite{BLR:NeronModels}, Proposition 1.2/8), and in that case, it inherits a group scheme structure via the N\'{e}ron mapping property. Therefore, the closed fiber over the closed point $s$ is again a smooth separated group scheme. By Chevalley's Theorem (e.g.\ \cite{BLR:NeronModels}, Theorem 9.2/1), the identity component of the geometric fiber at the closed point $s$ is uniquely an extension of an abelian variety by a connected affine group, which in turn splits as the product of a torus and a unipotent group. In fact, if the unipotent part is trivial, the extension is already defined over $k$, see \cite{BLR:NeronModels}, paragraph below Lemma 7.3/1. We therefore say that the abelian variety $A_K$ with N\'eron model $N$ over $S$ has \emph{good reduction} or \emph{abelian reduction at $s$} if the identity component $N_{s}^0$ of $N_s$ is an abelian variety and that $A_K$ has \emph{semi-stable reduction at $s$} if $N_{s}^0$ is the extension of an abelian variety by a torus.

Note that in the case of elliptic curves, the N\'eron model can be identified with the smooth locus of a minimal regular elliptic fibration, cf.\ \cite{Liu}, Theorem~10.2.14. In the case of elliptic curves, if $N_s^0$ is a torus, we say that $A_K$ has \emph{multiplicative reduction}. If $N_s^0$ is unipotent, we say that $A_K$ has \emph{additive reduction}.

\subsection{Introducing the main technique}

In this subsection, we are going to prove the key observation of this article. We call $q \colon X^\prime \to X$ a \emph{finite \'etale Galois covering} if it is finite, \'etale and the number of elements in $G = \Aut(X^\prime / X)$ equals the degree of $q$.

\begin{prop} \label{Prop:TorsionInEtaleCase}
Let $f \colon X \to S$ be a fibration and $q \colon X^\prime \to X$ be a finite \'etale Galois covering with Galois group $G = \Aut(X^\prime / X)$. If $\FF$ is a coherent sheaf on $X$ such that $H^1(X^\prime, q^\ast \FF)$ is free, then
\[
	H^1(X,\FF) = R^{\oplus d} \oplus H^1(G, H^0(X^\prime, q^\ast \FF))
\]
holds for $d = \operatorname{rank} H^1(X, \FF)$.
\end{prop}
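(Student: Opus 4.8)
The plan is to realize $\FF$ through Galois descent and then feed the resulting Hochschild--Serre (Cartan--Leray) spectral sequence into the structure theory of finitely generated modules over the complete discrete valuation ring $R$. Since $q \colon X^\prime \to X$ is a finite \'etale Galois covering with $|G| = \deg q$, the sheaf $q^\ast \FF$ carries a canonical $G$-equivariant structure and descent yields a natural isomorphism $\FF \simeq (q_\ast q^\ast \FF)^G$. As $q$ is finite, hence affine, $q_\ast$ is exact and $H^i(X, q_\ast q^\ast \FF) \simeq H^i(X^\prime, q^\ast \FF)$, compatibly with the $G$-action. I would then construct the spectral sequence as the Grothendieck spectral sequence for the composite functor $(-)^G \circ \Gamma(X^\prime, -)$ on $G$-equivariant coherent sheaves, whose composite computes $\Gamma(X, \FF)$; after checking that $\Gamma(X^\prime, -)$ sends injective equivariant sheaves to $G$-acyclic modules, this produces
\[
E_2^{p,q} = H^p\big(G, H^q(X^\prime, q^\ast \FF)\big) \Longrightarrow H^{p+q}(X, \FF).
\]

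From this first-quadrant spectral sequence I extract the five-term exact sequence of low degree
\[
0 \to H^1\big(G, H^0(X^\prime, q^\ast \FF)\big) \xrightarrow{\alpha} H^1(X, \FF) \xrightarrow{\beta} H^1(X^\prime, q^\ast \FF)^G \xrightarrow{d_2} H^2\big(G, H^0(X^\prime, q^\ast \FF)\big).
\]
Every term is a finitely generated $R$-module, since $X$ and $X^\prime$ are proper over $S$ and the sheaves involved are coherent. Two observations drive the argument: first, the group cohomology $H^i(G, -)$ for $i \ge 1$ is annihilated by $|G|$, so the outer terms $H^i(G, H^0(X^\prime, q^\ast \FF))$ are torsion $R$-modules; second, by hypothesis $H^1(X^\prime, q^\ast \FF)$ is free, whence its submodule of invariants $H^1(X^\prime, q^\ast \FF)^G$ is again free, being a submodule of a finitely generated free module over the discrete valuation ring $R$.

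Writing $H^1(X, \FF) = R^{\oplus d} \oplus T$ for its torsion submodule $T$ via the structure theorem over $R$, I argue as follows. The map $\beta$ has torsion-free target, so $\beta(T) = 0$, giving $T \subseteq \ker \beta = \im \alpha$; conversely $\im \alpha \simeq H^1(G, H^0(X^\prime, q^\ast \FF))$ is torsion and hence $\im \alpha \subseteq T$. Therefore $T = \im \alpha \simeq H^1(G, H^0(X^\prime, q^\ast \FF))$, and the structure theorem delivers the claimed splitting $H^1(X, \FF) \simeq R^{\oplus d} \oplus H^1(G, H^0(X^\prime, q^\ast \FF))$ with $d = \operatorname{rank} H^1(X, \FF)$. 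I expect the only genuine difficulty to lie in setting up the spectral sequence with the correct abutment: namely justifying the descent isomorphism $\FF \simeq (q_\ast q^\ast \FF)^G$ together with the acyclicity condition needed for the Grothendieck spectral sequence, to the effect that global sections of injective equivariant sheaves are cohomologically trivial $G$-modules. Once the five-term sequence is in hand, the module-theoretic conclusion over the DVR is routine.
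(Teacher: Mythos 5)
Your proof is correct, and its core is the same as the paper's: extract the five-term exact sequence of the Hochschild--Serre/Cartan--Leray spectral sequence, note that $H^1(G,H^0(X^\prime,q^\ast\FF))$ is killed by $\vert G\vert$ and hence torsion, and play this against the freeness of $H^1(X^\prime,q^\ast\FF)$ to identify the torsion submodule of $H^1(X,\FF)$ with the image of $H^1(G,H^0(X^\prime,q^\ast\FF))$; that endgame is word for word the paper's argument. Where you genuinely differ is in how the spectral sequence is produced. The paper takes the short route: it cites the Hochschild--Serre spectral sequence in \'etale cohomology together with the comparison isomorphism between \'etale and Zariski cohomology for quasi-coherent sheaves, so nothing has to be constructed. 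You instead stay entirely inside coherent-sheaf theory: faithfully flat descent along the finite \'etale Galois cover gives $\FF \simeq (q_\ast q^\ast \FF)^G$, and the Grothendieck composite-functor spectral sequence for $(-)^G \circ \Gamma(X^\prime,-)$ yields the same $E_2$-page and abutment. This is a legitimate, more self-contained alternative --- it is precisely the Tohoku-style spectral sequence that the paper itself invokes later in the proof of Theorem~\ref{Thm:AdditiveReduction} --- but it carries the verification burden you flag: you must work in a category with enough injectives, namely $G$-equivariant \emph{quasi-coherent} sheaves (or $\OO_{X^\prime}$-modules), not coherent ones as written, and you must check both that the derived functors of $\Gamma(X^\prime,-)$ on that category recover ordinary cohomology and that injective equivariant sheaves have $G$-cohomologically trivial global sections. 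Both checks are standard: induction $\bigoplus_{g \in G} g^\ast(-)$ is an exact left (and, $G$ being finite, right) adjoint to the forgetful functor, so injective equivariant sheaves are injective as plain sheaves and are direct summands of induced sheaves, whose global sections are induced $G$-modules and hence acyclic by Shapiro's lemma. One small simplification in your endgame: you do not need $H^1(X^\prime,q^\ast\FF)^G$ to be free, only torsion-free, which is immediate from it being a submodule of a free module; this weaker statement is all the paper uses.
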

\begin{proof}
Recall that for any abelian sheaf $F$ on $X$ for the \'etale topology, one has the \emph{Hochschild--Serre spectral sequence}
\[
H^r(G,H^s(X_{\text{\'et}}^\prime,q^\ast F)) \Longrightarrow H^{r+s}(X_{\text{\'et}},F),
\]
see \cite{Milne:EtaleCohomology}, Chapter~III, Theorem 2.20. In particular, if $F = \FF$ is a quasi-coherent Zariski sheaf, like the structure sheaf $\OO_X$, the \'etale and Zariski cohomology coincide, cf.\ \cite{Milne:EtaleCohomology}, Remark~3.8. Hence, taking the five-term exact sequence yields
\[
0 \longrightarrow H^1(G, H^0(X^\prime, q^\ast \FF)) \longrightarrow H^1(X, \FF) \longrightarrow H^1(X^\prime, q^\ast \FF)^G.
\]
If $\FF$ is a coherent sheaf, we may write $H^1(X, \FF) = R^{\oplus d} \oplus \TT$ for some $d \ge 0$, where $\TT$ is the torsion in $H^1(X,\FF)$. If we assume $H^1(X^\prime, q^\ast \FF)$ to be free, we obtain $\TT = H^1(G, H^0(X^\prime, q^\ast \FF))$: The $R$-module $H^1(G, H^0(X^\prime, q^\ast \FF))$ is torsion because $G$ is a finite group. Therefore, its image lies in $\TT$. On the other hand, as $H^1(X^\prime, q^\ast \FF)$ is free by assumption, every element of $\TT$ is mapped to zero, hence lies in the image of $H^1(G, H^0(X^\prime, q^\ast \FF))$
\end{proof}

The following example is a localization of the construction done in \cite{Katsura+Ueno:EllipticSurfacesInCharacteristicP}, Example~8.1, where Katsura and Ueno calculate examples of wild elliptic fibers and determine the length of the torsion in $H^1(X,\OO_X)$.

\begin{ex} \label{Ex:ExampleKU}
Let $k$ be a field of characteristic $p > 0$ and $R = k[[t]]$ with field of fractions $K = k((t))$. Take an Artin--Schreier extension $K^\prime = K[x]/(x^p-x-t^m)$ for some $m$ prime to $p$, with integral closure $R^\prime$ of $R$ in $K^\prime$. Given an ordinary elliptic curve $E_k$ over $k$ with non-trivial $p$-torsion point $P$, define $X^\prime = E_k \otimes_k R^\prime$. This gives a fibration over $S^\prime = \Spec(R^\prime)$ and $G = \Gal(K^\prime / K)$ acts on $X^\prime$ via translation on the first factor and Galois action on the second factor. The quotient morphism $q \colon X^\prime \to X = X^\prime / G$ is \'etale as the group acts freely on the closed points, cf.\ \cite{Mumford:AbelianVarieties},
Theorem in Chapter II, Section 7. Hence we may apply Proposition~\ref{Prop:TorsionInEtaleCase} and obtain $H^1(X,\OO_X) = R \oplus H^1(G,R^\prime)$.
\end{ex}

\section{Maximal \'{e}tale covers} \label{Sec:MaximalEtaleCovers}

Given a fibration $f \colon X \to S$, we have seen in Proposition~\ref{Prop:TorsionInEtaleCase} that the \'etale coverings play an important role in determining the torsion of $H^1(X,\OO_X)$. To every finite \'{e}tale morphism $Y \to X$, we can form the Stein factorization $Y \to T \to S$ of $Y \to S$. We denote by $M = M_{X/S}$ the composite of all finite extensions $K(T)$ obtained in this way in some separable closure. If additionally $K(X) / K$ is separable (e.g.\ for abelian fibrations), it is called the \emph{maximal base field (of \'{e}tale coverings of the total space)} in \cite{Mitsui2015}, Definition~4.9. Write the special fiber $X_k$ as the sum of prime divisors $\sum_{i=1}^r m_i X_i$ and denote by $n_i$ the degree of the purely inseparable field extension $k_i / k$, where $k_i$ is the integral closure of $k$ in $\Gamma(X_i, \OO_{X_i})$. We have the following properties (cf.\ \cite{Mitsui2015}, Proposition~4.11, Lemma~4.12, Lemma~4.14 and Proposition~4.15):

\begin{prop} \label{Prop:EtaleBaseField}
Assume that the extension $K(X) / K$ is separable and denote $m = \gcd_i (m_i)$ as well as $n = \gcd_i (n_i)$. Then:
\begin{enumerate}
\item $M$ is finite and Galois, with $[M \colon K] \mid mn$.
\item Any subextension $K^\prime$ of $M/K$ induces an \'etale covering of $X$. More precisely, if $S^\prime$ is the normalization of $S$ in $K^\prime$, the normalization of $X \times_S S^\prime$ composed with the projection on $X$ is \'etale.
\item For every finite extension $K^\prime / K$ with $X(K^\prime) \neq \varnothing$, we have $M \subset K^\prime$.
\item If $g \colon Y \to X$ is a proper birational $S$-morphism between two fibrations over $S$ with $X$ regular, then $M_{X/S} = M_{Y/S}$.
\end{enumerate}
\end{prop}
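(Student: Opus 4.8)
The plan is to treat the four assertions in turn, the common engine being a local ramification analysis at the generic points $\xi_i$ of the components $X_i$ of the special fiber, combined with purity of the branch locus. First I would record that every field $K(T)$ occurring in the definition of $M$ is \emph{separable} over $K$: since $Y \to X$ is finite étale and $K(X)/K$ is separable, $K(Y)/K$ is separable, and $K(T) \subseteq K(Y)$. Thus it suffices to analyse finite separable extensions $K'/K$. For such a $K'$, the normalization $S' = \Spec(R')$ of $S$ in $K'$ is again a complete discrete valuation ring; write $e'$ for its ramification index over $R$ and $n' = [k':k]$ for its residue degree, which is purely inseparable because $k$ is separably closed, so that $[K':K] = e'n'$ and $k'$ is again separably closed.

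The core step is the following criterion for (ii) and the divisibility in (i): writing $X'$ for the normalization of $X \times_S S'$, the map $X' \to X$ is étale if and only if $e' \mid m_i$ and $n' \mid n_i$ for every $i$. To see this I would work at each $\xi_i$, where $\OO_{X,\xi_i}$ is a discrete valuation ring with $f^\#(t) = u\,\pi_i^{m_i}$ for a uniformizer $\pi_i$ and unit $u$, and whose residue field contains $k_i$ with $[k_i:k] = n_i$. Base changing to $R'$ and normalizing, the resulting extension of discrete valuation rings over $\xi_i$ is unramified with trivial residue extension exactly when $e' \mid m_i$ (absorbing the ramification coming from $t = (t')^{e'}\cdot(\text{unit})$) and $n' \mid n_i$ (so that $k'/k$ is absorbed by $k_i/k$). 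Because $X$ is regular, étaleness of the finite morphism $X' \to X$ in codimension one propagates to étaleness everywhere by Zariski--Nagata purity of the branch locus. This gives (ii) at once, since any $K' \subseteq M$ has $e' \mid m = \gcd_i m_i$ and $n' \mid n = \gcd_i n_i$, and it yields $[K':K] = e'n' \mid mn$. For (i) it then remains to observe that the class of extensions whose normalization is étale is stable under composita (a fibre product of étale covers is étale) and under $\Gal(K^{\sep}/K)$-conjugation (the criterion being intrinsic); hence their composite $M$ is itself such an extension, so finite, Galois, and of degree dividing $mn$.

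For (iii), suppose $X(K') \neq \varnothing$ for a finite extension $K'/K$, with $S' = \Spec(R')$ the normalization of $S$ in $K'$. By the valuative criterion applied to the proper morphism $f$, the $K'$-point extends to a section $\sigma \colon \Spec(R') \to X$. I would pull the maximal cover $X_M \to X$ (the normalization of $X \times_S S_M$, finite étale by (i)) back along $\sigma$ to obtain a finite étale covering of $\Spec(R')$. Since $R'$ is complete, hence henselian, local with separably closed residue field, this covering is completely split, so $\sigma$ lifts to $\Spec(R') \to X_M$. Passing to generic fibres gives a $K'$-point of $X_\eta \otimes_K M$, whose projection to $\Spec(M)$ yields a $K$-algebra homomorphism $M \to K'$, necessarily a field embedding; therefore $M \subseteq K'$.

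Finally, for (iv) let $g \colon Y \to X$ be a proper birational $S$-morphism with $X$ regular and $Y$ a fibration, so $K(Y) = K(X)$; pulling an étale cover of $X$ back along $g$ leaves the associated base field unchanged, giving $M_{X/S} \subseteq M_{Y/S}$. For the reverse inclusion I would use that, $X$ being regular, the exceptional locus of $g$ has image of codimension at least two, so $g$ is an isomorphism over every $\xi_i$, whence $\OO_{X,\xi_i} = \OO_{Y,\eta_i}$ for the corresponding generic point $\eta_i$ of $Y_k$. Given a finite étale cover of $Y$ with base field $K'$, its normalization $Y'$ of $Y \times_S S'$ is étale over $Y$, in particular over each $\eta_i$; transporting through the identification of local rings shows the normalization of $X \times_S S'$ is étale over $X$ in codimension one, and purity on the regular scheme $X$ again promotes this to an étale cover, so $K' \subseteq M_{X/S}$. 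The main obstacle I anticipate is precisely the local computation underlying the criterion — pinning down how the ramification index $e'$ and the purely inseparable residue degree $n'$ of $S'/S$ interact with the data $m_i$ and $n_i$ at $\xi_i$ after normalization — together with the repeated reliance on purity of the branch locus, which is what forces the regularity hypothesis on $X$ in (iv) and lets étaleness in codimension one be upgraded to étaleness on all of $X$.
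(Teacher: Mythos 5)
Your proof collapses at its central step: the ``if'' direction of your criterion --- that $e' \mid m_i$ and $n' \mid n_i$ for all $i$ forces the normalization of $X \times_S S'$ to be \'etale over $X$ --- is false whenever $K'/K$ is wildly ramified, and this is precisely the direction you use to prove (ii), hence also (iii) (which needs $X_M \to X$ \'etale) and the reverse inclusion in (iv) (which needs the normalized base change of $Y$ to be \'etale at the $\eta_i$). Divisibility of ramification indices absorbs \emph{tame} ramification (Abhyankar's lemma; this tame case is exactly the paper's Proposition~\ref{Prop:PrimeToEtale}, whose proof visibly needs $d$ prime to $p$), but not wild ramification: locally, if $t = \pi^p$ in $\OO_{X,\xi_i}$ and $K' = K(x)$ with $x^p - x = t^{-m}$, $\gcd(m,p)=1$, then substituting $y = x - \pi^{-m}$ turns the base change into $y^p - y = \pi^{-m}$, a \emph{ramified} Artin--Schreier extension of $\Frac(\OO_{X,\xi_i})$, even though $e' = p \mid m_i$ and $n' = 1$. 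The paper's own Example~\ref{Ex:ExampleKU} refutes the criterion globally: there $m = p$, $n = 1$, so $[M:K] \mid p$ and $M$ equals the one Artin--Schreier extension used in the construction; your criterion would make \emph{every} totally ramified degree-$p$ extension of $K$ (and $k((t))$ has infinitely many such, with distinct ramification breaks) induce an \'etale covering and hence embed in $M$, contradicting $[M:K] \mid p$ and even the finiteness of $M$ that you assert in (i). The same phenomenon underlies the Remark after Theorem~\ref{Thm:TorsionMultCase}: with additive reduction one has $M = K$ while $m$ can be a nontrivial power of $p$. A secondary problem: in (iv) you need (ii) for the merely normal scheme $Y$, but your route to (ii) uses Zariski--Nagata purity on the total space, which is unavailable without regularity.

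For comparison: the paper does not reprove this proposition at all --- it cites Mitsui (Proposition~4.11, Lemmas~4.12 and 4.14, Proposition~4.15), adding only the observation that the proof of (iv) needs $Y$ normal rather than regular. The parts of your argument that are sound are the ``only if'' direction of the local analysis (which is the correct source of the bound $[M:K] \mid mn$, since \'etaleness at $\xi_i$ forces $e' \mid m_i$ and, because $k'/k$ is purely inseparable while the residue extension is separable, $n' \mid n_i$) and your proof of (iii) (valuative criterion plus complete splitting of finite \'etale covers over the henselian ring $R'$), which is fine once (ii) is available. But (i) and (ii) must be established structurally, not numerically: one shows that each Stein field $K(T)$ of a finite \'etale cover $Y \to X$ itself induces an \'etale covering, because $Y \to X$ factors as $Y \to X' \to X$ through the normalization $X'$ of $X \times_S T$, and an intermediate normalization of a finite \'etale cover of a normal scheme is again \'etale over the base; one then checks that \'etale-inducing extensions are stable under composita and under Galois conjugation. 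In other words, \'etaleness of the wild part of the covering is always inherited from an \'etale cover one already has --- it can never be manufactured from the multiplicities $m_i$, $n_i$ alone, which is what your criterion attempts to do.
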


Note that the last statement is stated in \cite{Mitsui2015}, Proposition~4.11, under the additional assumption that $Y$ is regular as well, but the proof only needs $Y$ to be normal. This is important for us, as we can now prove:

\begin{prop} \label{Prop:EtaleFieldDependsGenericFiber}
The maximal base field $M_{X/S}$ of a fibration $f \colon X \to S$ with $X$ regular and $K(X)/K$ separable only depends on the generic fiber $X_\eta$, i.e.\ for every other fibration $g \colon Y \to S$ with $Y_\eta \simeq X_\eta$, we have $M_{Y/K} = M_{X/K}$
\end{prop}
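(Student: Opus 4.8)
The goal is to show that $M_{X/S}$ depends only on the generic fiber $X_\eta$. I want to reduce this to Proposition~\ref{Prop:EtaleBaseField}(iv), which already tells me that $M$ is invariant under proper birational morphisms between fibrations with $X$ regular (and only normality needed on the target). So the plan is to connect two arbitrary regular fibrations $X$ and $Y$ sharing the same generic fiber by a chain of proper birational morphisms, exactly as in the proof of Proposition~\ref{Prop:InvariancePseudoRational}.

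Let me sketch the proof. Given another fibration $g \colon Y \to S$ with $Y$ regular and an isomorphism $\iota \colon X_\eta \to Y_\eta$, I would form the graph $\Gamma$ of $\iota$ inside $X \times_S Y$ and take its closure $Z$ with the reduced structure. The two projections restrict to proper birational $S$-morphisms $p \colon Z \to X$ and $q \colon Z \to Y$, since $Z$ maps birationally onto each factor (both restrict to isomorphisms on the common generic fiber). The scheme $Z$ is integral but need not be regular; however, I can pass to its normalization $\tilde{Z} \to Z$, which is finite and birational, and thus obtain proper birational $S$-morphisms $\tilde{Z} \to X$ and $\tilde{Z} \to Y$ with $\tilde{Z}$ normal.

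Now I would apply Proposition~\ref{Prop:EtaleBaseField}(iv) twice. Since $X$ is regular and $\tilde{Z}$ is normal, the morphism $\tilde{Z} \to X$ gives $M_{X/S} = M_{\tilde{Z}/S}$. Likewise, since $Y$ is regular and $\tilde{Z}$ is normal, $\tilde{Z} \to Y$ gives $M_{Y/S} = M_{\tilde{Z}/S}$. Combining these yields $M_{X/S} = M_{Y/S}$, which is the claim. I should also note that $K(X)/K$ is separable by hypothesis and that this separability passes to $\tilde{Z}$ and to $Y$ since all three share the same function field $K(X_\eta) = K(Y_\eta)$, so Proposition~\ref{Prop:EtaleBaseField} genuinely applies throughout.

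The main subtlety is making sure the hypotheses of Proposition~\ref{Prop:EtaleBaseField}(iv) are met at each application: that proposition requires the \emph{source} of the proper birational morphism to be a fibration over $S$ and the target with the regularity label to be $X$, so I must check that $\tilde{Z}$ is itself a fibration, i.e.\ that it is normal, integral, proper over $S$, and satisfies $\OO_S = \tilde{f}_\ast \OO_{\tilde{Z}}$. Normality and integrality are automatic for the normalization of an integral scheme, and properness is inherited from $X \times_S Y \to S$. The condition $\tilde{f}_\ast \OO_{\tilde{Z}} = \OO_S$ follows because $\tilde{Z} \to X$ is proper birational with $X$ normal, so $(\tilde{Z} \to X)_\ast \OO_{\tilde{Z}} = \OO_X$ and hence $\tilde{f}_\ast \OO_{\tilde{Z}} = f_\ast \OO_X = \OO_S$. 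This is the only place where a little care is needed; everything else is the same graph-closure argument already used in Proposition~\ref{Prop:InvariancePseudoRational}.
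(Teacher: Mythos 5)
Your proof is correct and follows essentially the same route as the paper: form the graph of the isomorphism of generic fibers, take its closure in $X \times_S Y$, normalize (finiteness of the normalization holding because $S$ is excellent), and apply Proposition~\ref{Prop:EtaleBaseField}~(iv) to both projections. Your extra verification that $\tilde{Z}$ is genuinely a fibration (via $(\tilde{Z} \to X)_\ast \OO_{\tilde{Z}} = \OO_X$ for a proper birational map to a normal scheme) is a detail the paper leaves implicit, and is a welcome addition.
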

\begin{proof}
Like in the proof of Proposition~\ref{Prop:InvariancePseudoRational}, we take the graph of an isomorphism of the generic fibers, its closure in $X \times_S Y$ with reduced structure sheaf and normalize it. As $S$ is complete, all schemes considered are excellent and therefore the normalization here is a finite morphism of degree one. So the construction yields a fibration $Z \to S$ and proper $S$-morphisms $Z \to X$, $Z \to Y$ which are isomorphisms on the generic fiber. Applying Proposition~\ref{Prop:EtaleBaseField} $(iv)$ gives $M_{Y/S} = M_{Z/S} = M_{X/S}$.
\end{proof}

Because the residue field $k$ is assumed to be separably closed, there exists a unique extension $K^\prime / K$ of degree $d$ prime to the residue characteristic $p$ by adjoining a $d$-th root of unity. It is Galois. This leads to:

\begin{prop} \label{Prop:PrimeToEtale}
Assume $X$ to be regular and let $d$ be the prime-to-$p$ part of $m = \gcd_i (m_i)$. Then $d \mid [M \colon K]$.
\end{prop}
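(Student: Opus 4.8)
The plan is to show that the unique extension $K'/K$ of degree $d$ constructed just above is contained in $M$; since $M/K$ is finite and separable, multiplicativity of degrees in the tower $K \subseteq K' \subseteq M$ then gives $d = [K':K] \mid [M:K]$. Note first that because $d$ is the prime-to-$p$ part of $m = \gcd_i(m_i)$ and $m \mid m_i$ for every $i$, we have $d \mid m_i$ for all $i$. Write $S' = \Spec(R')$ for the normalization of $S$ in $K'$, so that $R' = R[\varpi]$ with $\varpi$ a uniformizer satisfying $\varpi^d = \pi$ up to a unit, and let $X'$ denote the normalization of $X \times_S S'$. I would prove that $q' \colon X' \to X$ is finite \'etale. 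Granting this, $q'$ is a finite \'etale covering; its Stein factorization $X' \to T \to S$ satisfies $R' \subseteq \Gamma(X', \OO_{X'})$, since $X' \to S$ factors through $S'$, so on each connected component one gets an injection $K' \hookrightarrow K(T_j)$, whence $K' \subseteq M$ by the very definition of the maximal base field.

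To establish that $q'$ is \'etale I would reduce to codimension one via purity of the branch locus: as $X$ is regular, $X'$ is normal, and $q'$ is finite, the locus in $X$ over which $q'$ fails to be \'etale is, if nonempty, of pure codimension one. Over $\eta$ one has $X'_\eta = X_\eta \times_K K'$, which is \'etale over $X_\eta$ because $K'/K$ is separable; hence $q'$ is \'etale at every horizontal codimension-one point. It remains to treat the vertical codimension-one points, namely the generic points $\xi_i$ of the components $X_i$ of the special fiber. Once unramifiedness there is shown, there is an open $U \subseteq X$ containing $X_\eta$ and all the $\xi_i$, hence with complement of codimension $\ge 2$, over which $q'$ is finite \'etale; the Grothendieck--Zariski--Nagata purity theorem then extends $X'|_U \to U$ to a finite \'etale cover of $X$, which, being normal and finite over $X$, must coincide with $X'$. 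In particular this supplies the flatness needed to pass from ``unramified'' to ``\'etale''.

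The heart of the matter, and what I expect to be the main technical point, is the local computation at $\xi_i$. The local ring $A := \OO_{X,\xi_i}$ is a discrete valuation ring with uniformizer $t_i$ and residue field of characteristic $p$; since the special fiber equals $\Div(\pi) = \sum_j m_j X_j$, the uniformizer $\pi$ of $R$ satisfies $\pi = u\, t_i^{m_i}$ for a unit $u \in A$. Writing $m_i = d e_i$ and setting $w := \varpi\, t_i^{-e_i}$ in the total ring of fractions, one computes $w^d = u$. Because $u$ is a unit and $d$ is prime to $p$, the polynomial $z^d - u$ is separable modulo $\mm_A$, so $A[z]/(z^d-u)$ is finite \'etale over $A$, in particular normal; as it contains $A \otimes_R R' = A[\varpi]/(\varpi^d - \pi)$ (up to the harmless unit) and shares its total ring of fractions, it is exactly the normalization. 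Hence $q'$ is \'etale along $\xi_i$.

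Combining the local \'etaleness at all the $\xi_i$ with the purity argument shows that $q' \colon X' \to X$ is finite \'etale, and the Stein-factorization observation then yields $K' \subseteq M$, so that $d \mid [M:K]$. The only delicate points I anticipate are bookkeeping ones: checking that the unit factor relating $\varpi^d$ and $\pi$ does not affect separability of $z^d-u$, and confirming that $X'$, possibly disconnected, still produces components each containing a copy of $S'$ so that every factor of its Stein base contains $K'$.
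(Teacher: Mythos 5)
Your proof is correct and follows essentially the same route as the paper: you adjoin a $d$-th root of a uniformizer of $R$, reduce \'etaleness of the normalized base change to the codimension-one points via Zariski--Nagata purity, and identify the normalization at each vertical generic point as $A[z]/(z^d-u)$ with $u$ a unit, which is \'etale since $d$ is prime to $p$. The only cosmetic difference is that you spell out the Stein-factorization step showing $K' \subseteq M$, which the paper leaves implicit.
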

\begin{proof}
	Let $K^\prime / K$ be the unique extension of degree $d$ given by $K^\prime= K[T]/(T^d-u)$, where $u \in R$ is a uniformizer. Set $R^\prime$ to be the integral closure of $R$ in $K^\prime$ with spectrum $S^\prime$. We have to show that the normalization of $X \times_S S^\prime$ gives an \'etale covering $\pi \colon X^\prime \to X$.
	By Zariski--Nagata purity (cf.\ \cite{SGA_1}, Chapter~X, Th\'eor\`eme~3.1), it suffices to check \'etaleness on the points of codimension one. So let $P \in X$ be a point of codimension one and $A = \OO_{X,P}$ its stalk, a discrete valuation ring due to $X$ being regular. The base change $A[T]/(T^d-u)$ of $A$ by the flat morphism $R \to R^\prime = R[T]/(T^d-u)$ is again flat. If $P$ lies over $\eta$, the uniformizer $u$ is a unit in $A$ and $\Spec(A[T]/(T^d-u)) \to \Spec(A)$ is unramified because the fiber over the residue field $\kappa(A)$ of $A$ is given by the finite separable field extension $\kappa(A)[T]/(T^d-u)$. Hence, $\Spec(A[T]/(T^d-u)) \to \Spec(A)$ is \'etale and $A[T]/(T^d-u)$ is already normal.
	
	Now if $P$ lies over the closed point $s \in S$ with residue field $k$, the base change $A[T]/(T^d-u)$ is not normal anymore: As $A$ is a discrete valuation ring, we can write $u = a t^{ml}$ in $A$, where $a$ is a unit, $t$ a uniformizer in $A$ and $l$ a positive integer. Denoting the residue class of $T$ in $A[T]/(T^d-u)$ by $\sqrt[d]{u}$, the element $\sqrt[d]{a} = \sqrt[d]{u} \ t^{-(md^{-1})l}$ is a $d$-th root of $a$ and the field of fractions $L^\prime$ of $A[T]/(T^d-u)$ is isomorphic to both $L(\sqrt[d]{u})$ and $L(\sqrt[d]{a})$, where $L = K(X)$. In particular, $\sqrt[d]{a}$ satisfies an integral equation, but $\sqrt[d]{a}$ is not element of $A[T]/(T^d-u)$.
	On the other hand, consider $A[T]/(T^d-a)$. Its field of fractions is $L^\prime$ and the homomorphism
	\[
	A[T]/(T^d-u) \longrightarrow A[T]/(T^d-a), \quad T \longmapsto t^{\frac{ml}{d}} T
	\]
	is the normalization of $A[T]/(T^d-u)$ in $L^\prime$. Like in the case where $P$ lies over $\eta$, as $a$ is a unit in $A$ and $p$ is coprime to $d$, the map $\Spec(A[T]/(T^d-a)) \to \Spec(A)$ is \'etale and we are done.
\end{proof}

Obviously, the \'{e}tale Galois covering $X^\prime \to X$ associated to $M$ is a good candidate to apply Proposition~\ref{Prop:TorsionInEtaleCase} to: 

\begin{thm} \label{Thm:TorsionNormalFree}
Let $f \colon X \to S$ be a fibration such that the extension $K(X)/K$ is separable (e.g.\ $f$ is an abelian fibration). Denote by $M = M_{X/K}$ the maximal base field of \'etale coverings of the total space and by $R^\prime$ the integral closure of $R$ in $M$. Assume that $X^\prime$, the normalization of the base change of $X$ to $R^\prime$, has free $H^1(X^\prime, \OO_{X^\prime})$.
Then the torsion in $H^1(X,\OO_X)$ is canonically isomorphic to $H^1(G, R^\prime)$, where $G$ is the Galois group of $M/K$.
\end{thm}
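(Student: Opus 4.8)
The plan is to apply Proposition~\ref{Prop:TorsionInEtaleCase} directly to the étale Galois covering $q \colon X^\prime \to X$ associated to the maximal base field $M$, with $\FF = \OO_X$. The key point is that this proposition already computes the torsion in $H^1(X,\OO_X)$ as $H^1(G, H^0(X^\prime, q^\ast \OO_X))$, provided that $H^1(X^\prime, q^\ast \OO_X)$ is free. So the theorem will follow once I verify that the hypotheses of the proposition are met and identify the group-cohomology coefficient module correctly.

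First I would check that $q \colon X^\prime \to X$ is a finite étale Galois covering with Galois group $G = \Gal(M/K)$. By Proposition~\ref{Prop:EtaleBaseField}~$(ii)$, since $M$ is a subextension of itself, the normalization $X^\prime$ of $X \times_S S^\prime$ (here $S^\prime = \Spec R^\prime$) composed with the projection to $X$ is indeed étale; and by part~$(i)$ the extension $M/K$ is finite Galois. The degree of $q$ equals $[M \colon K] = \lvert G\rvert$, so $q$ is Galois in the sense defined before Proposition~\ref{Prop:TorsionInEtaleCase}. Since $\OO_X$ is coherent and $q^\ast \OO_X = \OO_{X^\prime}$, the hypothesis that $H^1(X^\prime, q^\ast \OO_X) = H^1(X^\prime, \OO_{X^\prime})$ is free is exactly the assumption in the theorem statement. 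Thus Proposition~\ref{Prop:TorsionInEtaleCase} applies and yields
\[
H^1(X,\OO_X) = R^{\oplus d} \oplus H^1(G, H^0(X^\prime, \OO_{X^\prime})), \qquad d = \operatorname{rank} H^1(X,\OO_X).
\]

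The remaining step, and the main thing to pin down, is the identification $H^0(X^\prime, \OO_{X^\prime}) = R^\prime$. Here $X^\prime \to S^\prime$ is a fibration (it is proper over $S^\prime$, and $X^\prime$ is normal as a normalization), so by definition of a fibration we have $f^\prime_\ast \OO_{X^\prime} = \OO_{S^\prime}$, whence $H^0(X^\prime, \OO_{X^\prime}) = H^0(S^\prime, \OO_{S^\prime}) = R^\prime$. I would want to confirm that $X^\prime \to S^\prime$ is genuinely a fibration — that is, that the structure-sheaf pushforward condition holds for the normalized base change; this is where one must be slightly careful, since base change and normalization can in principle alter global sections, but the completeness (hence excellence) of $S$ together with $S^\prime = \Spec R^\prime$ being again the spectrum of a complete discrete valuation ring makes this routine. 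The $G$-action on $H^0(X^\prime, \OO_{X^\prime}) = R^\prime$ induced by the deck transformations of $q$ matches the Galois action of $G = \Gal(M/K)$ on $R^\prime$, so the coefficient module is precisely $R^\prime$ as a $G$-module.

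The main obstacle I anticipate is purely one of bookkeeping rather than of substance: namely, making sure that the Galois action used in the Hochschild--Serre spectral sequence (the action of $G = \Aut(X^\prime/X)$ on $H^0(X^\prime,\OO_{X^\prime})$) agrees with the arithmetic Galois action of $\Gal(M/K)$ on $R^\prime$ under the identification $H^0(X^\prime,\OO_{X^\prime}) \cong R^\prime$. This requires knowing that $\Aut(X^\prime/X) \cong \Gal(M/K)$ compatibly, which follows from the construction of $X^\prime$ as the normalized base change along $S^\prime/S$ and the fact that $M/K$ is Galois. Everything else is an immediate citation of the two preceding results, so the theorem is essentially a clean specialization of Proposition~\ref{Prop:TorsionInEtaleCase} to the canonical maximal étale cover.
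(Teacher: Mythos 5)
Your proposal is correct and is precisely the argument the paper intends: the paper states Theorem~\ref{Thm:TorsionNormalFree} with no separate proof, presenting it as the immediate application of Proposition~\ref{Prop:TorsionInEtaleCase} to the \'etale Galois covering $q \colon X^\prime \to X$ furnished by Proposition~\ref{Prop:EtaleBaseField}~(ii), exactly as you do. Your added care about the identification $H^0(X^\prime,\OO_{X^\prime}) = R^\prime$ and the compatibility of $\Aut(X^\prime/X)$ with $\Gal(M/K)$ fills in details the paper leaves implicit (they follow from the Stein factorization defining $M$ and the maximality of $M$), so there is no gap.
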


So to study the torsion in the first cohomology group of abelian fibrations, we need to check whether the first cohomology group of the normalization of the base change of $X$ to $R^\prime$ is free.

\section{Application to good and multiplicative reduction} \label{Sec:ApplGoodMultReduction}

Let $f \colon X \to S$ be an abelian fibration. For the rest of this section, denote by $M = M_{X/K}$ the maximal base field of \'etale coverings of the total space and by $R^\prime$ the integral closure of $R$ in $M$, with spectrum $S^\prime$. The normalization of the base change of $X$ to $R^\prime$ will be denoted by $X^\prime$. In this section, we will take advantage of Theorem~\ref{Thm:TorsionNormalFree} and study the question when $H^1(X^\prime, \OO_{X^\prime})$ is free.

\subsection{The case of multiplicative reduction}

Assume that $f \colon X \to S$ is a minimal elliptic fibration such that the Jacobian of $X_K$ has multiplicative reduction. By \cite{Liu+Lorenzini+Raynaud}, Proposition~8.3, there is a unique cyclic extension $K^\prime / K$ minimal among all other extensions $L / K$ such that $X_K(L) \neq \varnothing$. This result lies at the core of the following theorem:

\begin{thm}\label{Thm:TorsionMultCase}
Let $f \colon X \to S$ be an elliptic fibration such that the Jacobian of $X_K$ has multiplicative reduction. Then $H^1(X^{\prime},\OO_{X^\prime})$ is free.
\end{thm}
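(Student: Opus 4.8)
The plan is to reduce the whole statement to the claim that the special fiber of $X'$ is reduced. Indeed, since $X'$ is, by definition of $M$, the étale covering of $X$ associated to the maximal base field (Proposition~\ref{Prop:EtaleBaseField}~$(ii)$), the morphism $X' \to X$ is finite étale; as $X$ is regular, so is $X'$, and $X'$ is again an elliptic fibration over $S' = \Spec R'$. The torsion in $H^1(X', \OO_{X'})$ is then concentrated on the special fiber and is nonzero only if that fiber is \emph{wild}, i.e.\ a multiple fiber of multiplicity divisible by $p$. A reduced special fiber carries no multiple fiber at all, so the torsion vanishes and $H^1(X', \OO_{X'})$ is free. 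Thus it suffices to show that the special fiber of $X'$ is reduced.

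To force a reduced fiber I would exhibit a section of $X' \to S'$, and here the cited result enters. By \cite{Liu+Lorenzini+Raynaud}, Proposition~8.3, the minimal extension $K'/K$ with $X_K(K') \neq \varnothing$ is cyclic, and the decisive point for multiplicative reduction is that it is \emph{tamely} ramified, its degree equal to the (prime-to-$p$) multiplicity $m$ of the special fiber of $X$. First I would note $M \subseteq K'$ by applying Proposition~\ref{Prop:EtaleBaseField}~$(iii)$ to the field $K'$, over which $X_K$ acquires a rational point. For the reverse inclusion I would rerun the local computation of Proposition~\ref{Prop:PrimeToEtale}: at a codimension-one point of the special fiber a uniformizer of $R$ has the form $a\,t^{ml}$ with $a$ a unit, and since $m$ is prime to $p$ the normalization of the degree-$m$ base change is étale over $X$. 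Hence $K'$ induces an étale covering, so $K' \subseteq M$ and therefore $M = K'$.

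With $M = K'$ the generic fiber splits, $X_M \simeq E_M$ for $E = \operatorname{Jac}(X_K)$, so in particular $X_M(M) \neq \varnothing$. As $X'$ is proper over the trait $S'$, the valuative criterion extends this rational point to a section $\sigma \colon S' \to X'$. Since $X'$ is regular, $\sigma$ is an effective Cartier divisor, and intersecting it with the special fiber $X'_{s'} = \sum_i m'_i X'_i$ gives $\sigma \cdot X'_{s'} = 1$; as $\sigma$ meets a single component this forces $m'_{i_0} = 1$ and hence $\gcd_i m'_i = 1$. The special fiber is therefore reduced, which completes the argument by the first paragraph.

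I expect the genuine obstacle to be the identity $M = K'$, more precisely the input that for multiplicative reduction the splitting field $K'/K$ is tamely ramified, equivalently that the multiplicity $m$ is prime to $p$. Everything else in the above is formal. This tameness is exactly the feature that distinguishes multiplicative reduction from, for instance, supersingular good reduction, where the splitting cannot be realized by an étale covering and $X'$ genuinely retains a wild fiber. I would therefore isolate and carefully justify the tameness of $K'/K$ — either directly from \cite{Liu+Lorenzini+Raynaud} or from the toric structure of the Néron model, whose component group contributes only tamely to the obstruction against a section — as the heart of the proof.
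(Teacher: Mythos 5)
Your skeleton---identify $M$ with the minimal splitting field $K^\prime$ of \cite{Liu+Lorenzini+Raynaud}, Proposition~8.3, deduce that $X^\prime$ acquires a section, and conclude freeness because the special fiber of $X^\prime$ is no longer multiple---is the same as the paper's. But the step you yourself single out as ``the heart of the proof'' rests on a false premise: for multiplicative reduction the degree of $K^\prime/K$, equivalently the multiplicity $m$ of the special fiber, is \emph{not} prime to $p$ in general, and \cite{Liu+Lorenzini+Raynaud}, Proposition~8.3, asserts cyclicity but not tameness. Indeed, when $k$ is algebraically closed the Tate parametrization gives $H^1(K,E_K) \simeq \Hom_{\mathrm{cont}}(\Gal(K^{\sep}/K),\QQ/\ZZ)$, which has many elements of $p$-power order coming from Artin--Schreier quotients of wild inertia; the corresponding torsors have index, hence multiplicity, divisible by $p$. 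These are precisely the wild fibers of multiplicative type---that is, exactly the situation in which Theorem~\ref{Thm:TorsionMultCase} has any content. If $m$ were always prime to $p$, then $X$ itself would already be cohomologically flat and $H^1(X,\OO_X)$ torsion-free, making the theorem empty. Consequently you cannot ``rerun'' Proposition~\ref{Prop:PrimeToEtale}: that computation is Kummer-theoretic, requiring $K^\prime = K[T]/(T^d-u)$ with $d$ prime to $p$, whereas here $K^\prime/K$ is in general wildly ramified and of no such form, and the \'etaleness of $\Spec(A[T]/(T^d-a)) \to \Spec(A)$ fails outright when $p \mid d$.

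The true mechanism behind $K^\prime \subset M$ is special to multiplicative reduction and is exactly the content of the \emph{proof} of \cite{Liu+Lorenzini+Raynaud}, Proposition~8.3, which the paper invokes after first reducing to the minimal fibration via Propositions~\ref{Prop:EtaleFieldDependsGenericFiber} and \ref{Prop:InvariancePseudoRational} (a reduction you skip, but which is needed to apply that proof): the reduced special fiber is a cycle of rational curves, and a cycle admits connected cyclic \'etale coverings of \emph{every} degree, prime to $p$ or not, obtained by unwinding the cycle; this is what makes the normalized base change along the possibly wildly ramified extension $K^\prime/K$ \'etale over $X$. So your closing remark has the picture backwards: what distinguishes multiplicative reduction from, say, supersingular good reduction is not tameness of $K^\prime/K$, but the fact that the special fiber's topology still supports \'etale $p$-coverings, so that even a wildly ramified splitting field induces an \'etale covering of the total space (a supersingular elliptic curve, by contrast, has no connected \'etale cover of degree $p$). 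A smaller slip: a section does not force the special fiber to be \emph{reduced}, only that some component has multiplicity one, hence $\gcd_i m_i^\prime = 1$ (consider a fiber of type $\mathrm{III}^\ast$); that is still sufficient, since $\gcd_i m_i^\prime$ prime to $p$ gives cohomological flatness by \cite{Raynaud1970}, Th\'eor\`eme~8.2.1, and then freeness of $H^1(X^\prime,\OO_{X^\prime})$ by \cite{CossecDolgachev:EnriquesSurfaces}, p.~287, which is how the paper concludes.
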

\begin{proof}
Let $f \colon X \to S$ be an elliptic fibration. Using Proposition~\ref{Prop:EtaleFieldDependsGenericFiber} and Proposition~\ref{Prop:InvariancePseudoRational}, $M$ and the cohomology groups of $X^\prime$ only depend on the generic fiber. We therefore may assume that $f \colon X \to S$ is a minimal fibration. The proof of \cite{Liu+Lorenzini+Raynaud}, Proposition~8.3, then shows that $M = M_{X/S}$ from Section~\ref{Sec:MaximalEtaleCovers} is equal to $K^\prime$, the unique cyclic extension minimal among all other extensions $L / K$ such that $X_K(L) \neq \varnothing$. Hence, $X_M$ has a rational point and $X^\prime$ a section. Therefore, the multiplicity of its closed fiber is one and it is cohomologically flat by \cite{Raynaud1970}, Th\'eor\`eme~8.2.1. Thus, by \cite{CossecDolgachev:EnriquesSurfaces}, p.\ 287, the $R$-module $H^1(X^\prime, \OO_{X^\prime})$ is free.
\end{proof}

\begin{rem}
Let $X_K$ have additive reduction. Then there is no \'{e}tale covering of $X$ by \cite{Mitsui2015}, Case (A) on p.\ 1121 (the argument also works for $k$ separably closed), so $M = K$. Note that in particular, the multiplicity $m = \gcd_i (m_i)$ of the closed fiber is a power of $p$ by Proposition~\ref{Prop:PrimeToEtale}.
\end{rem}

\subsection{The case of good reduction}

This subsection is dedicated to the proof of the following:

\begin{thm} \label{Thm:TorsionGoodCase} 
Let $f \colon X \to S$ be an abelian fibration, and assume that the generic fiber $X_K$ is a torsor under an abelian variety $A_K$ with good reduction. Assume that one of the following conditions hold:
\begin{enumerate}
\item $X_K(M)$ is non-empty,
\item $k$ is algebraically closed and $A_K$ is an elliptic curve with ordinary good reduction.
\end{enumerate}
Then $H^1(X^\prime,\OO_{X^\prime})$ is free.
\end{thm}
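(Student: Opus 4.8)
The plan is to deduce freeness in both cases from the fact that the closed fiber of $X^\prime$ has multiplicity one, following the strategy of Theorem~\ref{Thm:TorsionMultCase}: once the closed fiber has multiplicity one, $X^\prime$ is cohomologically flat by \cite{Raynaud1970}, Th\'eor\`eme~8.2.1, and then $H^1(X^\prime, \OO_{X^\prime})$ is free by \cite{CossecDolgachev:EnriquesSurfaces}, p.~287 (for the elliptic case); in the higher-dimensional setting of (i) I would instead argue directly with an abelian scheme. The common starting point is that, by Proposition~\ref{Prop:EtaleBaseField}~$(ii)$, the morphism $X^\prime \to X$ is \'etale, so $X^\prime$ is regular (being \'etale over the regular scheme $X$) and in particular has pseudo-rational singularities; moreover $X^\prime$ is a fibration over $S^\prime$ whose generic fiber $X_M$ is a torsor under $A_M := A_K \otimes_K M$, which again has good reduction.

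For case (i): since $X_K(M) \neq \varnothing$, the torsor $X_M$ is trivial, so $X_M \simeq A_M$. As $A_M$ has good reduction, it extends to an abelian scheme $\mathcal{A} \to S^\prime$, a smooth proper fibration for which $R^1 f_\ast \OO_{\mathcal{A}}$ is locally free, whence $H^1(\mathcal{A}, \OO_{\mathcal{A}})$ is free over the local ring $R^\prime$. Both $X^\prime$ and $\mathcal{A}$ are fibrations over $S^\prime$ with pseudo-rational singularities and isomorphic generic fibers, so Proposition~\ref{Prop:InvariancePseudoRational} yields $H^1(X^\prime, \OO_{X^\prime}) \simeq H^1(\mathcal{A}, \OO_{\mathcal{A}})$, which is free. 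This settles (i) in arbitrary dimension.

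For case (ii): here $X_K(M)$ may be empty, and I would instead show directly that the closed fiber of $X^\prime$ has multiplicity one. Reducing to a minimal model as in Theorem~\ref{Thm:TorsionMultCase} (via Propositions~\ref{Prop:EtaleFieldDependsGenericFiber} and~\ref{Prop:InvariancePseudoRational}), the maximality of $M = M_{X/K}$ forces $M_{X^\prime/M} = M$, i.e.\ $X^\prime$ admits no nontrivial \'etale covering arising from a base-field extension. By Proposition~\ref{Prop:PrimeToEtale} applied to $X^\prime$, the prime-to-$p$ part of the multiplicity $m^\prime$ of its closed fiber divides $[M_{X^\prime/M} \colon M] = 1$, so $m^\prime = p^a$ for some $a \ge 0$. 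The ordinary good reduction now enters: since the reduction of the elliptic curve is ordinary, its $p$-torsion is an extension of an \'etale $\ZZ/p$ by $\mu_p$, and the resulting $p$-torsion point, combined with an Artin--Schreier covering of $S^\prime$, produces a nontrivial \'etale degree-$p$ covering of $X^\prime$ that strictly lowers the multiplicity whenever $a \ge 1$---precisely the mechanism of Example~\ref{Ex:ExampleKU}. This contradicts the equality $M_{X^\prime/M}=M$, so $a = 0$ and $m^\prime = 1$, and cohomological flatness and freeness follow as above.

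The main obstacle is the final step of case (ii): turning the \'etale part of the $p$-torsion of the ordinary reduction into an honest nontrivial \'etale covering of $X^\prime$ that demonstrably reduces the multiplicity of the closed fiber, so that maximality of $M$ genuinely forces $m^\prime = 1$. This is where ordinariness is indispensable---in the supersingular case the local-local $\alpha_p$-part of the $p$-torsion obstructs the construction and wild fibers can persist over $M$. Making the relationship between the $p$-part of the multiplicity and these $p$-power \'etale coverings precise, in contrast to the clean prime-to-$p$ statement of Proposition~\ref{Prop:PrimeToEtale}, is the technical heart of the argument.
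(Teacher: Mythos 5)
Your case (i) is correct and is essentially the paper's own argument: $X^\prime$ is regular (being \'etale over the regular $X$), the N\'eron model of $A_K$ is an abelian scheme whose base change to $R^\prime$ has free cohomology, and since $X_K(M)\neq\varnothing$ makes the generic fibers isomorphic, Proposition~\ref{Prop:InvariancePseudoRational} transfers freeness to $H^1(X^\prime,\OO_{X^\prime})$.

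Case (ii), however, rests on a false intermediate claim, so the gap you flag as ``the technical heart'' is not merely technical: it cannot be closed. You aim to show that the multiplicity $m^\prime$ of the closed fiber of $X^\prime$ equals one and then invoke \cite{Raynaud1970}, Th\'eor\`eme~8.2.1. Your first steps are fine and are in fact part of the paper's Lemma~\ref{Lem:TorsorComesFromLeft}: maximality gives $M_{X^\prime/M}=M$, and Propositions~\ref{Prop:EtaleBaseField} and~\ref{Prop:PrimeToEtale} then force $m^\prime=p^a$. But the final implication ``$a\ge 1$ yields a nontrivial \'etale covering of $X^\prime$'' fails. The torsor $X_M$ need not be trivial over $M$: by Lemma~\ref{Lem:TorsorComesFromLeft} its class lies in the image of
\[
H^1(G_{M^{\sep}},A_1(S^{\sep}))_{p^\infty} \longrightarrow H^1(G_{M^{\sep}},A(M^{\sep}))_{p^\infty},
\]
i.e.\ comes from the formal (connected) part $A_1$ of the $p$-divisible group, and such classes can be nonzero of order $p^a>1$. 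They produce \emph{tame} multiple fibers of $p$-power multiplicity: for instance, a multiple fiber $pP$ whose normal bundle has exact order $p$ in $\Pic(P)$; since a torsion line bundle of order $p=\car(k)$ corresponds to a $\mu_p$-covering rather than an \'etale one, such a fiber admits no \'etale covering at all, consistently with $M_{X^\prime/M}=M$, yet $m^\prime=p>1$. The mechanism of Example~\ref{Ex:ExampleKU} that you invoke detects only the image of the torsor class in the \'etale quotient $H^1(G_{M^{\sep}},A(k))_{p^\infty}\simeq\Hom(G_{M^{\sep}},\QQ_p/\ZZ_p)$, and that image is precisely what passing to $M$ kills ($M$ is the splitting field of the image of $X_K$ there, cf.\ the proof of Theorem~\ref{Thm:Sen}); the remaining connected part yields no coverings and no further drop in multiplicity. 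This is also why Th\'eor\`eme~8.2.1 (multiplicity one implies cohomological flatness, a sufficient but not necessary criterion) is the wrong tool: the conclusion of the theorem holds for fibrations that genuinely retain multiple fibers over $M$. The paper's proof instead quotes \cite{Raynaud1970}, Th\'eor\`eme~9.4.1, where ordinariness is the operative hypothesis: cohomological flatness of $X^\prime$ is \emph{equivalent} to the class of $X_M$ lying in the image of $H^1(G_{M^{\sep}},A_1(S^{\sep}))_{p^\infty}$, and that membership is exactly what Lemma~\ref{Lem:TorsorComesFromLeft} establishes, via \cite{Lang+Tate:PrincipalHomogeneousSpacesOverAbelianVarieties} (order and index have the same prime divisors), \cite{Gabber+Liu+Lorenzini:IndexOfAlgebraicVariety} (index equals the gcd of multiplicities), and the cocycle description $r(\xi_\sigma)=0$ from Proposition~\ref{Prop:TorsorsAsQuotients}. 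Replacing your multiplicity-one goal by this criterion is the necessary repair.
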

\begin{proof}[Proof in case $X_K(M) \neq \varnothing$]
Denote by $A$ the N\'eron model of $A_K$ over $S$. As $A_K$ has good reduction, $A$ is an abelian scheme, so its base change $A \otimes_R R^\prime$ is an abelian scheme as well and therefore has free cohomology groups. Now if $X_K(M)$ is non-empty, the generic fibers $X_M$ and $A_M$ of $X^\prime$ and $A \otimes_R R^\prime$ are isomorphic. The claim now follows from Proposition~\ref{Prop:InvariancePseudoRational}.
\end{proof}

Before proving Theorem~\ref{Thm:TorsionGoodCase} in the remaining case, we will need some preparatory results. These also hold for abelian fibrations such that the generic fiber is a torsor under an abelian variety with good reduction. Therefore, we will stick with this setting until we actually prove Theorem~\ref{Thm:TorsionGoodCase} under assumption $(ii)$.

Due to Proposition~\ref{Prop:EtaleFieldDependsGenericFiber} and Proposition~\ref{Prop:InvariancePseudoRational}, we may restrict ourselves to any abelian fibration that has generic fiber isomorphic to $X_K$. By \cite{Liu+Lorenzini+Raynaud}, Proposition~8.1 resp.\ \cite{Lewin-Menegaux:ModelesMinimaux}, there is a canonical choice of a regular fibration $X$ over $S$ with generic fiber isomorphic to $X_K$: It has the property that the action of $A_K$ on $X_K$ extends to an action $A \times_S X \to X$, where $A$ is the N\'{e}ron model of $A_K$ over $S$. The induced map $A \times_S X \to X \times_S X$, $(a,x) \mapsto (ax,x)$ is surjective. Let $L / K$ be a finite Galois extension such that $X_K(L) \neq \varnothing$. We denote by $R^{\prime \prime}$ the integral closure of $R$ in $L$ and $S^{\prime \prime}$ its spectrum. The base change $A_{S^{\prime \prime}}$ of $A$ to $S^{\prime \prime}$ is the N\'{e}ron model of $A_{L}$ over $S^{\prime \prime}$. Then $X$ is the quotient of $A_{S^{\prime \prime}}$ by a finite and flat equivalence relation by its construction in loc.cit. Note that due to $X_K(L) \neq \varnothing$ and Proposition~\ref{Prop:EtaleBaseField}, $M = M_{X/S} \subset L$ holds. Using the universal properties of fiber product and normalization, we obtain a decomposition
\begin{equation} \label{Eq:FactorizationOfQuotient}
\begin{tikzcd}
A_{S^{\prime \prime}} \arrow[r,"q_1"] \arrow[d] \arrow[rr, bend left,"q"] & X^{\prime} \arrow[r,"q_2"] \arrow[d] & X \arrow[d] \\
S^{\prime \prime} \arrow[r] & S^\prime \arrow[r] & S,
\end{tikzcd}
\end{equation}
where $q_2 \colon X^\prime \to X$ is the \'{e}tale covering associated to $M/K$.

\begin{lem} \label{Lem:ActionOnAbelianScheme}
The morphism $q$ from Diagram~\ref{Eq:FactorizationOfQuotient} is the quotient morphism induced by an action of $G = \Gal(L/K)$ on $A_{S^{\prime \prime}}$.
\end{lem}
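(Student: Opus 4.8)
The plan is to exhibit the finite flat equivalence relation defining $X$ as the graph of a twisted Galois action on $A_{S^{\prime\prime}}$. Since $X_K(L) \neq \varnothing$, I would first fix a point $P \in X_K(L)$. As $X_K$ is an $A_K$-torsor split by $L$, the assignment $a \mapsto a + P$ (additive notation for the torsor action) is an isomorphism $\phi \colon A_L \xrightarrow{\sim} X_L$ of $A_L$-torsors. Now $G = \Gal(L/K)$ acts on $X_L = X_K \otimes_K L$ through the second factor, with quotient $X_K$; transporting this action along $\phi$ yields the \emph{twisted action} $\sigma \star a = \sigma(a) + c_\sigma$ on $A_L$, where $c_\sigma = \sigma(P) - P \in A_K(L)$ is the associated $1$-cocycle. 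The cocycle identity $c_{\sigma\tau} = c_\sigma + \sigma(c_\tau)$ is exactly what makes $\star$ a genuine left action, and by the construction of the diagram the generic fiber $q_\eta \colon A_L \to X_K$ of $q$ is the composite of Galois quotients $A_L \cong X_L \to X_M \to X_K$, which under $\phi$ is precisely the quotient map for $\star$; in particular $\deg q_\eta = [L:K] = \abs{G}$.

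Next I would extend $\star$ to the integral model $A_{S^{\prime\prime}}$. The semilinear Galois action of $G$ on $A_{S^{\prime\prime}} = A \otimes_R R^{\prime\prime}$ comes from the action of $G$ on $R^{\prime\prime}$ over $R$. Each translation $t_{c_\sigma}$ extends from $A_L$ to $A_{S^{\prime\prime}}$: since $A$ is an abelian scheme, hence proper, the valuative criterion gives $A(R^{\prime\prime}) = A(L)$, so $c_\sigma$ defines a section of $A_{S^{\prime\prime}}$ over $S^{\prime\prime}$ and thus a translation automorphism. Setting $\sigma \star = t_{c_\sigma} \circ \sigma$ on $A_{S^{\prime\prime}}$ gives morphisms extending the generic-fiber action. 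Because $A_{S^{\prime\prime}}$ is integral while $A_{S^{\prime\prime}}$ and $X$ are separated over $S$, the identities $\sigma\star(\tau\star -) = (\sigma\tau)\star -$ and $q \circ (\sigma\star) = q$ hold on all of $A_{S^{\prime\prime}}$, as they hold on the dense generic fiber. Hence $G$ acts on $A_{S^{\prime\prime}}$ over $S$ and $q$ is $G$-invariant.

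It then remains to identify this action with the finite flat equivalence relation $R \subset A_{S^{\prime\prime}} \times_S A_{S^{\prime\prime}}$ whose quotient is $X$ in the construction from loc.\ cit. The $G$-action produces its own relation $R_G = \bigsqcup_{\sigma \in G} \Gamma_{\sigma\star}$, the union of the graphs of the $\sigma\star$; each graph is finite flat over $A_{S^{\prime\prime}}$ via the first projection, so $R_G$ is finite flat of degree $\abs{G}$. Both $R$ and $R_G$ are closed subschemes of $A_{S^{\prime\prime}} \times_S A_{S^{\prime\prime}}$, finite flat over the integral scheme $A_{S^{\prime\prime}}$ through the first projection, and their generic fibers agree with $A_L \times_{X_K} A_L$, since $q_\eta$ is the $\star$-quotient. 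A finite flat closed subscheme over an integral base is the schematic closure of its generic fiber — flatness forces its structure sheaf to be torsion-free over $A_{S^{\prime\prime}}$, so it injects into its generic fiber — whence $R = R_G$. Consequently $X = A_{S^{\prime\prime}}/R = A_{S^{\prime\prime}}/G$, and $q$ is the induced quotient morphism.

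The hard part will be this last identification: ensuring that the equivalence relation furnished by the construction in loc.\ cit.\ really is the graph of the $G$-action, rather than some coarser gluing concentrated on the special fiber. The schematic-closure argument resolves this once both relations are known to be genuinely flat over $A_{S^{\prime\prime}}$ of the same degree $\abs{G}$, so the technical heart is verifying that $R_G$ is finite flat and matching the degree of $R$ by comparing generic fibers. It is also worth recording that $q$ is finite — it is proper, as $A_{S^{\prime\prime}}$ is proper over $S$ and $X$ is separated over $S$, and quasi-finite since $q_\eta$ has finite fibers — so that passing to the quotient presents no difficulty.
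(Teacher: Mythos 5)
Your first two paragraphs are, in substance, the paper's own proof: the paper fixes an $A_L$-equivariant isomorphism $\phi \colon A_L \to X_L$ (your $a \mapsto a+P$), obtains the cocycle $\xi_\sigma = \phi^{-1}\sigma\phi\sigma^{-1}$ (your $c_\sigma = \sigma(P)-P$), extends the translations to $A_{S^{\prime\prime}}$ via $A_{S^{\prime\prime}}(L) = A_{S^{\prime\prime}}(S^{\prime\prime})$ (it invokes the N\'eron property where you invoke properness; both work, since $A$ is an abelian scheme under the standing good-reduction hypothesis), and deduces the action identities and invariance of $q$ from density of the generic fiber plus separatedness. The genuine gap is in your third paragraph, exactly at the point you yourself call the hard part. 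The graphs are \emph{not} disjoint, and the inference ``each graph is finite flat over $A_{S^{\prime\prime}}$, so $R_G$ is finite flat of degree $\vert G \vert$'' fails. Since $k$ is separably closed, the residue extension $k^{\prime\prime}/k$ is purely inseparable, so every $\sigma \in G$ acts trivially on the closed fiber of $A_{S^{\prime\prime}}$; hence for every $\sigma$ in the subgroup $H = \{\, \sigma \in G \mid r(c_\sigma) = 0 \,\}$ of Proposition~\ref{Prop:TorsorsAsQuotients} (there the cocycle is denoted $\xi_\sigma$), the automorphism $\sigma\star = t_{c_\sigma}\circ\sigma$ restricts to the identity on the closed fiber, and all graphs $\Gamma_{\sigma\star}$ with $\sigma \in H$ contain the diagonal of the special fiber. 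So either you read $R_G$ as a coproduct, and then $R_G \to A_{S^{\prime\prime}} \times_S A_{S^{\prime\prime}}$ is not a closed immersion; or you read it as the scheme-theoretic union, and then its flatness over $A_{S^{\prime\prime}}$ does not follow from flatness of the individual graphs --- it is essentially equivalent to what you are trying to prove. Note that $H$ is nontrivial precisely in the wild situations the paper studies ($H = \Gal(L/M)$, and $q_1$ is not \'etale): treating the graphs as disjoint amounts to assuming the twisted action is free, i.e.\ that $q$ is \'etale.

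The gap is repairable, because flatness of $R_G$ is never actually needed. Take $R_G$ to be the scheme-theoretic union; its ideal sheaf is $\bigcap_\sigma I_{\Gamma_{\sigma\star}}$. Each graph, being finite flat over the integral scheme $A_{S^{\prime\prime}}$, is the schematic closure of its own generic fiber, and the generic fibers are pairwise disjoint (the twisted action on $A_L$ is free, as $A_L \times_{X_K} A_L \simeq \coprod_{\sigma} A_L$), so $\bigcap_\sigma I_{\Gamma_{\sigma\star}}$ is also the ideal of the schematic closure of $R_{G,\eta}$. Hence $R_G$ equals the closure of its generic fiber without any flatness hypothesis, and your (correct) closure argument for $R$ then yields $R = R_G$; flatness of $R_G$ comes out as a consequence rather than an input. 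Alternatively, you can bypass the comparison of equivalence relations entirely, in the spirit of the paper's own conclusion: once $q$ is invariant under the twisted action (your density argument), it factors through a morphism $A_{S^{\prime\prime}}/G \to X$; this morphism is finite and an isomorphism on generic fibers, and $X$ is normal, so it is an isomorphism by Zariski's main theorem. Either route finishes the proof; but as written, your flatness claim for $R_G$ is a real error, not a routine verification.
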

\begin{proof}
We first note that $q$ restricted to the generic fiber $X_{L}$ becomes \'etale: By the construction of $A_{S^{\prime \prime}} \to X$, the map $q$ coincides up to isomorphisms $A_{L} \simeq X_{L}$ with the projection $X_{L} \to X_K$, which is \'etale as the base change of the separable extension $L / K$. Fixing an $A_{L}$-equivariant isomorphism $\phi \colon A_{L} \to X_{L}$, we obtain an injection
\[
\Aut(A_{L}/X_K) \longrightarrow \Aut(X_{L}/X_K), \quad \psi \longmapsto \phi \psi \phi^{-1}
\]
which is bijective as both groups are of the same order $\vert G \vert$. Now $\Aut(X_{L}/X_K)$ can be identified with $G$. In particular, $G$ does not only act on $A_{L} = A_K \times_K L$ via its second factor (which - abusing notation - we denote by $\sigma$ for $\sigma  \in G$), but it also acts on $A_{L}$ via
\[
\phi^{-1} \sigma \phi = \phi^{-1} \sigma \phi \sigma^{-1} \sigma = \xi_\sigma \sigma,
\]
where $\xi_\sigma = \phi^{-1} \sigma \phi \sigma^{-1}$ is a translation of $A_{L}$, i.e.\ can be identified with an element of $A_{L}(L)$. One easily sees that the collection $\xi = (\xi_\sigma)_{\sigma \in G}$ satisfies the cocycle condition and that it gives an element in $H^1(G,A_K(L))$ associated to $X_K$. Moreover, every cohomologuous cocycle gives an action on $A_{L}$ in this way, and its quotient is isomorphic to $X_K$.

The group action of $G$ on $A_{L}$ extends to all of $A_{S^{\prime \prime}}$ because the translations $\xi_\sigma \in A_{S^{\prime \prime}}(L) = A_{S^{\prime \prime}}(S^{\prime \prime})$ and the action of $\sigma \in G$ on the second factor of $A_{S^{\prime \prime}} = A \times_S S^{\prime \prime}$ extend. As $q$ and the quotient morphism coincide on the generic fiber of $A_{S^{\prime \prime}}$ - a dense open subset - they have to be the same on all of $A_{S^{\prime \prime}}$, cf.\ \cite{Goertz+Wedhorn}, Corollary~9.9. Therefore, we consider $q$ as a quotient morphism.
\end{proof}

To see how the factorization $q = q_1 \circ q_2$ is related to the Galois group $G$, we recall that the reduction map
\[
	r \colon A(L) = A(S^{\prime \prime}) \longrightarrow A(k^{\prime \prime})
\]
is surjective due to the smoothness of $A$ over $S$, cf. \cite{Liu}, Corollary~6.2.13. Here, $k^{\prime \prime}$ denotes the residue field of the closed point in $S^{\prime \prime}$.

\begin{prop} \label{Prop:TorsorsAsQuotients}
	The morphism $q_1$ from Diagram~(\ref{Eq:FactorizationOfQuotient}) is the quotient morphism induced by restricting the action of $G$ on $A_{S^{\prime \prime}}$ to $H = \{\, \sigma \in G \mid r(\xi_\sigma) = 0 \,\}$, a normal subgroup of $G$. Moreover, the Galois group $G/H$ of $M_{X/S}$ over $K$ is an abelian group.
\end{prop}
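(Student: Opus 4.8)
The plan is to extract both assertions from the single fact that $\bar\xi\colon G\to A(k^{\prime\prime})$, $\sigma\mapsto r(\xi_\sigma)$, is a group homomorphism. First I would prove this. The family $\xi=(\xi_\sigma)$ is a cocycle, so $\xi_{\sigma\tau}=\xi_\sigma+\sigma(\xi_\tau)$ in $A(S^{\prime\prime})$. Since $R$ is complete with separably closed residue field $k$, the residue extension $k^{\prime\prime}/k$ is purely inseparable, so $G$ fixes $k^{\prime\prime}$ and thus acts trivially on $A(k^{\prime\prime})$. As $r$ is $G$-equivariant, applying it to the cocycle identity gives $\bar\xi_{\sigma\tau}=\bar\xi_\sigma+\bar\xi_\tau$. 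Hence $H=\ker\bar\xi$ is normal in $G$ and $\bar\xi$ induces an injection $G/H\hookrightarrow A(k^{\prime\prime})$, so $G/H$ is abelian. This gives the normality of $H$ and, once $G/H$ is identified with $\Gal(M_{X/S}/K)$, the abelian-ness claimed at the end.

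It remains to see that $q_1$ is the quotient by $H$, which—because $M/K$ is Galois by Proposition~\ref{Prop:EtaleBaseField}—is equivalent to $H=\Gal(L/M)$. Two pieces of geometry feed into this. On the one hand, on the special fiber $A_{k^{\prime\prime}}$ the Galois part of $\sigma$ reduces to the identity (as $G$ fixes $k^{\prime\prime}$), so $\sigma$ acts there simply as translation by $\bar\xi_\sigma$; in particular $H$ acts trivially on $A_{k^{\prime\prime}}$, whereas a $\sigma\notin H$ acts by the fixed-point-free translation $\bar\xi_\sigma\neq 0$. On the other hand, $A_{S^{\prime\prime}}$ is normal and the given factorization identifies its generic fiber quotient with $A_L/\Gal(L/M)=X'_\eta$, so $X'=A_{S^{\prime\prime}}/\Gal(L/M)$; as $\Gal(L/M)$ is normal in $G$, the group $\Gal(M/K)=G/\Gal(L/M)$ acts on $X'$ compatibly with $q_1$.

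Now I would prove the two inclusions. For $H\subseteq\Gal(L/M)$: the cover $q_2\colon X'\to X$ is étale Galois with group $\Gal(M/K)$, so this group acts freely on the closed points of $X'$; given $\sigma\in H$, the equivariance of $q_1$ and the surjectivity of $A_{k^{\prime\prime}}\to X'_s$ force the image of $\sigma$ in $\Gal(M/K)$ to fix $X'_s$ pointwise, whence it is trivial and $\sigma\in\Gal(L/M)$. For $\Gal(L/M)\subseteq H$: since $H$ acts trivially on $A_{k^{\prime\prime}}$, the quotient $G/H$ acts on the special fiber of $A_{S^{\prime\prime}}/H$ through the nonzero translations $\im\bar\xi$, hence freely on closed points, and Mumford's criterion (as in Example~\ref{Ex:ExampleKU}) makes $A_{S^{\prime\prime}}/H\to X$ étale; by the definition of $M$ as the maximal base field of étale coverings, its base field $L^{H}$ lies in $M$, so $\Gal(L/M)\subseteq H$. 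Together these give $H=\Gal(L/M)$, identifying $q_1$ as the quotient by $H$ and $G/H$ with $\Gal(M/K)$.

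The main obstacle is the étaleness of $A_{S^{\prime\prime}}/H\to X$: one has to understand the special fiber of this quotient well enough to see that $G/H$ acts freely on its closed points and to apply Mumford's theorem, and then combine this with the maximality characterization of $M$. The homomorphism property of $\bar\xi$ and the freeness argument for $H\subseteq\Gal(L/M)$ are comparatively routine once the action on $A_{k^{\prime\prime}}$ has been reduced to translation by $\bar\xi_\sigma$.
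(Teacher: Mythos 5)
Your proof is correct and follows essentially the same route as the paper's: the normality of $H$ and the abelianness of $G/H$ come from the observation that $\sigma \mapsto r(\xi_\sigma)$ is a homomorphism into the (abelian, fixed-point-free) translations of the special fiber, and the identification of $q_1$ as the quotient by $H$ combines Mumford's étaleness criterion for $A_{S^{\prime\prime}}/H \to X$ with the maximality of $M$, exactly as in the paper. The only divergence is the inclusion $H \subseteq \Gal(L/M)$: the paper excludes a non-trivial étale factorization of $A_{S^{\prime\prime}} \to A_{S^{\prime\prime}}/H$ using that $H$ acts trivially on the special fiber, while you deduce from the same key fact, via equivariance of $q_1$ and surjectivity of $A_{k^{\prime\prime}} \to X^\prime_s$, that the image of $H$ in $\Aut(X^\prime/X)$ fixes the special fiber pointwise and is therefore trivial by rigidity of deck transformations of a connected étale covering --- a repackaging rather than a different method.
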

\begin{proof}
To see that $H$ is a normal subgroup, we consider it as the kernel of the map
\[
	G \longhookrightarrow \Aut_S(A_{S^{\prime \prime}}) \longrightarrow \Aut_{k^{\prime \prime}}((A_{S^{\prime \prime}})_{k^{\prime \prime}}), \quad \sigma \longmapsto r(\xi_\sigma)
\]
the latter map being the restriction of an automorphism to the closed fiber over the residue field $k^{\prime \prime}$ of $R^\prime$. Note that $\sigma$ restricted to the closed fiber is trivial, as $k = \kappa(R)$ is separably closed. Using the cocycle conditions and the fact that $r$ is invariant under conjugating a point in $A(L)$ by some element of $G$, one shows that $r(\xi_{\sigma \tau \sigma^{-1} \tau^{-1}}) = 0$ for all $\sigma,\tau \in G$, that is, each commutator lies in $H$. Thus, $G/H$ is abelian. The morphism $A_{S^{\prime \prime}} /H \to (A_{S^{\prime \prime}} /H)/(G/H) = A_{S^{\prime \prime}} / G = X$ is \'etale because it arises from an \'etale group scheme acting freely on closed points. Therefore, we have a map $X^\prime \to A_{S^{\prime \prime}} / H$. On the other hand, as $H$ acts trivially on the special fiber, there is no non-trivial factorization of $A_{S^{\prime \prime}} \to A_{S^{\prime \prime}}/H$ into two \'etale morphisms, hence $X^\prime = A_{S^{\prime \prime}}/H$. 
\end{proof}

The description of $X_M$ in terms of cocycles and the reduction map will enable us to use a result of Raynaud \cite{Raynaud1970}. To this end, we additionally need to assume that $k$ is algebraically closed. The argumentation of \cite{Raynaud1970}, Th\'eor\`eme~9.4.1 shows exactly as in the case of elliptic fibrations that for an abelian variety $A_K$ with ordinary good reduction, one has an exact sequence
\[
0 \to H^1(G_{K^{\sep}},A_1(S^{\sep}))_{p^\infty} \to H^1(G_{K^{\sep}},A(K^{\sep}))_{p^\infty} \to H^1(G_{K^{\sep}},A(k))_{p^\infty} \to  0,
\]
where $G_{K^{\sep}}$ is the absolute Galois group of $K$, $A_1(S^{\sep})$ is the kernel of the reduction map $A(K^{\sep}) \to A(k)$ and $H^1(G_{K^{\sep}},A(K^{\sep}))_{p^\infty}$ denotes the subgroup of elements of $p$-power order of $H^1(G_{K^{\sep}},A(K^{\sep}))$. More specifically, $A_1$ is a $p$-divisible group and $S^{\sep}$ the normalization of $S$ in $K^{\sep}$. As we will only work with the $A_M$-torsor $X_M$, we switch to the ground field $M$:

\begin{lem} \label{Lem:TorsorComesFromLeft}
The class of the $A_M$-torsor $X_M$ lies in the image of
\[
H^1(G_{M^{\sep}},A_1(S^{\sep}))_{p^\infty} \longrightarrow H^1(G_{M^{\sep}},A(M^{\sep}))_{p^\infty}.
\]
\end{lem}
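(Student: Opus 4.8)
The plan is to realise the class of $X_M$ by a cocycle that already takes values in the kernel of reduction, and then to check that this class has $p$-power order; the two facts together place it in the image of $H^1(G_{M^{\sep}},A_1(S^{\sep}))_{p^\infty}\to H^1(G_{M^{\sep}},A(M^{\sep}))_{p^\infty}$.

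First I would record that $X_M$ is split by $L$: indeed $X_K(L)\neq\varnothing$ and $M\subset L$ by Proposition~\ref{Prop:EtaleBaseField}, so $X_L\simeq A_L$. Hence the class $[X_M]\in H^1(G_{M^{\sep}},A(M^{\sep}))$ is inflated from $H^1(\Gal(L/M),A(L))=H^1(H,A(L))$, where $H=\Gal(L/M)$ is the subgroup of $G$ with $L^H=M$ (Proposition~\ref{Prop:TorsorsAsQuotients}). Under the torsor--cocycle dictionary of Lemma~\ref{Lem:ActionOnAbelianScheme}, base change along $K\subset M$ is restriction of cocycles, so $[X_M]$ is represented by $\res_H(\xi)=(\xi_\sigma)_{\sigma\in H}$, the restriction to $H$ of the cocycle $\xi$ defining $X_K$.

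Now the definition of $H$ in Proposition~\ref{Prop:TorsorsAsQuotients} says precisely that $r(\xi_\sigma)=0$ for every $\sigma\in H$, i.e.\ $\xi_\sigma\in A_1(L)=\ker(A(L)\to A(k))$. Thus $(\xi_\sigma)_{\sigma\in H}$ already defines a class $c\in H^1(H,A_1(L))$ lifting $[X_M]$ along the inclusion $A_1\hookrightarrow A$. By naturality of inflation, the square of inflation maps $H^1(H,-)\to H^1(G_{M^{\sep}},-)$ attached to $A_1(L)\subset A(L)$ and $A_1(S^{\sep})\subset A(M^{\sep})$ commutes, so $\operatorname{inf}(c)\in H^1(G_{M^{\sep}},A_1(S^{\sep}))$ is sent to $[X_M]$ by the map induced by $A_1\hookrightarrow A$. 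This already shows that $[X_M]$ comes from $A_1$.

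It remains to land in the $p^\infty$-parts. Here I would use that, $A_K$ having good reduction over a ring of residue characteristic $p$, the kernel of reduction $A_1(S^{\sep})$ is the group of $\mathfrak m$-valued points of the formal group of $A$, on which multiplication by any prime $\ell\neq p$ is an automorphism; hence $A_1(S^{\sep})$ is uniquely $\ell$-divisible, multiplication by $\ell$ is an isomorphism on $H^1(G_{M^{\sep}},A_1(S^{\sep}))$, and the torsion of the latter group is $p$-primary. On the other hand $\operatorname{inf}(c)$ is annihilated by $|H|$, since $H^1(H,-)$ is killed by $|H|$ and inflation is a homomorphism; being torsion in a group with $p$-primary torsion, it has $p$-power order, so $\operatorname{inf}(c)\in H^1(G_{M^{\sep}},A_1(S^{\sep}))_{p^\infty}$ and its image is $[X_M]\in H^1(G_{M^{\sep}},A(M^{\sep}))_{p^\infty}$, as claimed. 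The step that needs the most care is the identification of $[X_M]$ with $(\xi_\sigma)_{\sigma\in H}$ together with the observation that these values lie in $A_1(L)$: this is exactly where the cocycle description of Lemma~\ref{Lem:ActionOnAbelianScheme} has to be matched against the definition of $H$ through the reduction map. The only other genuine input is the unique divisibility of the kernel of reduction away from $p$, which is what forces the class into the $p$-power-torsion subgroup.
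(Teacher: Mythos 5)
Your proof is correct, but it follows a genuinely different route from the paper's. Both arguments pivot on the same input from Proposition~\ref{Prop:TorsorsAsQuotients}, namely that $X_M$ is represented by the cocycle $(\xi_\sigma)_{\sigma\in H}$ with $r(\xi_\sigma)=0$. The paper, however, does not lift this cocycle directly: it first shows that the class $[X_M]$ has $p$-power order by a period--index argument (by \cite{Lang+Tate:PrincipalHomogeneousSpacesOverAbelianVarieties}, Proposition~5, the period divides the index and has the same prime divisors; by \cite{Gabber+Liu+Lorenzini:IndexOfAlgebraicVariety}, Proposition~8.2.(b), the index equals the multiplicity $m$ of the special fiber; and $m$ is a $p$-power by Propositions~\ref{Prop:EtaleBaseField} and~\ref{Prop:PrimeToEtale}), and then concludes from the vanishing of $(r(\xi_\sigma))_\sigma$ that $[X_M]$ dies in $H^1(G_{M^{\sep}},A(k))_{p^\infty}$, so that \emph{exactness} of Raynaud's sequence --- which is where the standing hypothesis of \emph{ordinary} good reduction enters --- produces a preimage in $H^1(G_{M^{\sep}},A_1(S^{\sep}))_{p^\infty}$. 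You instead exhibit the preimage explicitly: since $\xi_\sigma\in A_1(L)$ for $\sigma\in H$, the cocycle itself defines a class in $H^1(H,A_1(L))$ whose inflation lifts $[X_M]$, and the $p$-primality comes not from period--index theory but from unique prime-to-$p$ divisibility of the kernel of reduction, which kills all prime-to-$p$ torsion in $H^1(G_{M^{\sep}},A_1(S^{\sep}))$. Your approach buys independence from the exactness of Raynaud's sequence (so the lemma holds for arbitrary good reduction, not just ordinary) and avoids the external inputs from Lang--Tate and Gabber--Liu--Lorenzini; the paper's approach buys the fact that the order of the torsor class itself is a $p$-power, tied to the geometry of fiber multiplicities, and stays within the machinery that is needed anyway for Theorem~\ref{Thm:TorsionGoodCase}(ii). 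Two points you should make explicit to be complete: first, $A_1(L)$ is stable under the action of $H$ because $r(\sigma P)=r(P)$ (the residue field $k$ being algebraically closed, $\sigma$ acts trivially on $A(k)$), so the restricted cocycle genuinely lives in $H^1(H,A_1(L))$; second, the unique $\ell$-divisibility of $A_1(S^{\sep})$ should be verified on each finite level $A_1(R_{L^\prime})$, where completeness of the discrete valuation ring $R_{L^\prime}$ makes multiplication by $\ell$ an automorphism of the formal group, before passing to the filtered union over finite subextensions of $M^{\sep}/M$.
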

\begin{proof}
Let $L / M$ be a minimal Galois extension splitting $X_M$. By \cite{Lang+Tate:PrincipalHomogeneousSpacesOverAbelianVarieties}, Proposition~5, the order $d$ of the torsor $X_M$ in $H^1(G_{M^{\sep}},A(M^{\sep}))$ divides the index $\operatorname{ind}_{X_M}$ of $X_M$, i.e.\ the greatest common divisor of all degrees of separable field extensions splitting $X_M$. Moreover, $d$ and $\operatorname{ind}_{X_M}$ are divisible by the same primes, loc.cit. Now $\operatorname{ind}_{X_M}$ equals $m$, the greatest common divisor of multiplicities of the special fiber of a regular fibration with generic fiber $X_M$, cf.\ \cite{Gabber+Liu+Lorenzini:IndexOfAlgebraicVariety}, Proposition~8.2.(b). But $m$ is a power of $p$ due to Proposition~\ref{Prop:EtaleBaseField} and \ref{Prop:PrimeToEtale}, so $d$ must be a power of $p$ as well and thus an element of $H^1(G_{M^{\sep}},A(M^{\sep}))_{p^\infty}$. Moreover, a cocycle $(\xi_\sigma)_\sigma$ corresponding to $X_{M}$ is mapped to $(r(\xi_\sigma))_\sigma = 0$ under $H^1(G_{M^{\sep}},A(M^{\sep}))_{p^\infty} \to H^1(G_{M^{\sep}},A(k))_{p^\infty}$ according to Proposition~\ref{Prop:TorsorsAsQuotients}. Hence, it has to be in the image of $H^1(G_{M^{\sep}},A_1(S^{\sep}))_{p^\infty}$ in $H^1(G_{M^{\sep}},A(M^{\sep}))_{p^\infty}$.
\end{proof}

\begin{proof}[Proof of Thm~\ref{Thm:TorsionGoodCase} under assumption $(ii)$]
Let $k$ be algebraically closed and $f \colon X \to S$ be an elliptic fibration such that $X_K$ is a torsor under an abelian variety $A_K$ with ordinary good reduction. Using Proposition~\ref{Prop:EtaleFieldDependsGenericFiber} and Proposition~\ref{Prop:InvariancePseudoRational}, it suffices to check the freeness of $H^1(X^\prime, \OO_{X^\prime})$ for any elliptic fibration with generic fiber isomorphic to $X_K$. Therefore, we can assume $f \colon X \to S$ to be a minimal elliptic fibration as constructed in Diagram~\ref{Eq:FactorizationOfQuotient}. Using Lemma~\ref{Lem:TorsorComesFromLeft}, we see that $X_M$ is in the image of $H^1(G_{M^{\sep}},A_1(S^{\sep}))_{p^\infty} \longrightarrow H^1(G_{M^{\sep}},A(M^{\sep}))_{p^\infty}$. Now \cite{Raynaud1970}, Th\'eor\`eme~9.4.1, states that this is equivalent for $X^\prime$ to be cohomologically flat. By the proper base change theorems (e.g. \cite{Hartshorne:AlgebraicGeometry}, Theorem~12.11), this is equivalent to $H^1(X^\prime, \OO_{X^\prime})$ being free.
\end{proof}

\section{Reducing to semi-stable reduction} \label{Sec:AddReduction}

Let $f \colon X \to S$ be an abelian fibration such that the abelian variety $A_K$ under which $X_K$ is a torsor does not have semi-stable reduction. By the theory of semi-stable reduction, there exists a finite extension $K^\prime/K$ such that $A_{K^\prime}$ becomes semi-stable, cf.\ \cite{SGA7_1}, Th\'eor\`eme~3.3.6.4. In particular, as $R$ is complete, there exists a unique minimal such extension, cf.\ \cite{Halle+Nicaise:NeronModels}, Theorem~3.3.6.4. It is Galois and its Galois group $G = \Gal(K^\prime/K)$ has order prime to $p$ whenever the residue characteristic $p$ is bigger than $2g+1$, where $g$ is the dimension of $A_K$, cf.\ \cite{Conrad:SemStabRedForAbelVar}, Theorem~6.8. In the case of elliptic curves, the reduction types after tame base change have been studied in detail in \cite{Dokchitser:TateAlgorihtm}, Theorem~3.


\begin{thm} \label{Thm:AdditiveReduction}
Let $A_K$ be an abelian variety that attains semi-stable reduction over a Galois extension $K^\prime/K$, with Galois group $G$ of order prime to the residue characteristic $p$. Assume that either $A_{K^\prime}$ has good reduction or that $A_K$ is an elliptic curve. If $X_K$ is a torsor under $A_K$ and $X$ a fibration over $S$ with generic fiber isomorphic to $X_K$ and pseudo-rational singularities, then
\[
H^1(X, \OO_X) = H^1(X^\prime, \OO_{X^\prime})^G,
\]
where $X^\prime$ is any proper model of $X_{K^\prime}$ with rational singularities over the integral closure $S^\prime$ of $S$ in $K^\prime$.
\end{thm}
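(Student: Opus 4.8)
\section*{Proof proposal}

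The plan is to realize $X$ as the quotient of a model of $X_{K'}$ by the Galois action of $G$ and to exploit that $\lvert G\rvert$ is invertible in $R$. Since $\lvert G\rvert$ is prime to the residue characteristic $p$ and $R$ is a complete discrete valuation ring whose residue field has characteristic $p$, the integer $\lvert G\rvert$ is a unit in $R$. Consequently, for every $R[G]$-module the higher group cohomology $H^j(G,-)$ vanishes for $j>0$, and the functor of $G$-invariants $(-)^G$ is exact, being realised by the averaging idempotent $\tfrac{1}{\lvert G\rvert}\sum_{\sigma\in G}\sigma$. This is the structural input that replaces the \'etale-covering machinery of the previous sections, which is unavailable here: as noted in the remark following Theorem~\ref{Thm:TorsionMultCase}, additive reduction admits no nontrivial \'etale base field, so one must instead descend along the \emph{ramified} tame extension $K'/K$.

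By Proposition~\ref{Prop:InvariancePseudoRational} the groups $H^i(X,\OO_X)$ depend only on the generic fiber $X_K$, and likewise $H^i(X',\OO_{X'})$ depends only on $X_{K'}=X_K\otimes_K K'$; moreover $G$ acts on $X_{K'}$ through the second factor, so its action on $H^i(X',\OO_{X'})$ is canonical and hence independent of the chosen model. I may therefore replace $X'$ by the normalization $X'_0$ of the base change $X\times_S S'$, which carries a $G$-action lifting that on $S'$ and is finite over $X$ via a morphism $\pi\colon X'_0\to X$, the composite of the finite normalization with the finite base change $S'\to S$. Since $(R')^G=R$ one has $(X\times_S S')/G=X$, and restricting to generic fibers shows that $X_{K'}\to X_K$ is finite \'etale $G$-Galois, being the base change of $\Spec K'\to\Spec K$. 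A $G$-invariant section of $\pi_*\OO_{X'_0}$ thus lies in $K(X)$ and is integral over $\OO_X$, hence in $\OO_X$ by normality; therefore $(\pi_*\OO_{X'_0})^G=\OO_X$. As $\pi$ is finite we have $H^i(X'_0,\OO_{X'_0})=H^i(X,\pi_*\OO_{X'_0})$, and passing to the $G$-invariant direct summand using the exactness of $(-)^G$ yields
\[
H^i(X,\OO_X)=H^i\bigl(X,(\pi_*\OO_{X'_0})^G\bigr)=H^i(X,\pi_*\OO_{X'_0})^G=H^i(X'_0,\OO_{X'_0})^G .
\]

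To pass from $X'_0$ to an arbitrary proper model $X'$ of $X_{K'}$ with rational singularities, I invoke Proposition~\ref{Prop:InvariancePseudoRational} once more over $S'$: the canonical isomorphism $H^i(X'_0,\OO_{X'_0})\simeq H^i(X',\OO_{X'})$ is $G$-equivariant, so taking invariants gives the stated identity. The main obstacle is precisely the input required for this last step, namely that the normalized base change $X'_0$ itself has pseudo-rational singularities, so that it qualifies as an admissible comparison model. Base change along the ramified extension $S'\to S$ followed by normalization can a priori create singularities, but because $\lvert G\rvert$ is prime to $p$ these are \emph{tame} (cyclic) quotient singularities, which are rational. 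I expect verifying this rationality uniformly to be the technical heart of the argument: the two hypotheses are exactly what make it tractable, since the tameness of $G$ governs the singularities of $X'_0$, while the case distinction---good reduction of $A_{K'}$, treated via \cite{Liu+Lorenzini+Raynaud}, or $A_K$ an elliptic curve, treated via minimal regular models---guarantees the existence of the regular or rational reference model against which $X'_0$ is compared.
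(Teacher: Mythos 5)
Your descent formalism is sound, and it is in fact a more elementary route to the identity that the paper extracts from the degeneration of the two spectral sequences in Grothendieck's Tohoku paper: since $\lvert G\rvert$ is invertible in $R$, your averaging argument correctly gives $(\pi_\ast\OO_{X_0'})^G=\OO_X$ and $H^1(X,\OO_X)=H^1(X_0',\OO_{X_0'})^G$ for the normalized base change $X_0'$. The genuine gap is the step you yourself flag: to invoke Proposition~\ref{Prop:InvariancePseudoRational} over $S'$ and compare $X_0'$ with an arbitrary pseudo-rational model $X'$, you need $X_0'$ itself to be pseudo-rational, and your justification --- that its singularities are tame cyclic \emph{quotient} singularities, hence rational --- is backwards. $X_0'$ is a tame cyclic \emph{cover} of $X$, ramified along the special fiber; it is $X=X_0'/G$, not $X_0'$, that is a tame quotient. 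Boutot and Heitmann--Ma yield pseudo-rationality of tame quotients (direct summands of the cover), and no such statement holds for ramified covers.

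Indeed the claim is false within the scope of the theorem. Take $X_K=A_K$ an elliptic curve with a fiber of Kodaira type II and $p>3$, so that the minimal extension $K'/K$ attaining good reduction is tame of degree $6$. At the cusp of the special fiber one can choose formal coordinates in which $t=y^2-x^3$ up to a unit (which can be absorbed), so the base change is formally $k[[x,y,t']]/(t'^6-y^2+x^3)$; this is already normal, and it is the simple elliptic singularity $\tilde{E}_8$, of geometric genus one --- not rational, hence not pseudo-rational. Concretely, for the minimal desingularization $\pi\colon Y'\to X_0'$ one has an exact sequence
\[
0\longrightarrow H^1(X_0',\OO_{X_0'})\longrightarrow H^1(Y',\OO_{Y'})\longrightarrow H^0(X_0',R^1\pi_\ast\OO_{Y'})\longrightarrow 0
\]
whose last term has length one, so the natural map from $H^1(X_0',\OO_{X_0'})$ to the cohomology $H^1(X',\OO_{X'})\simeq R'$ of a regular model is not an isomorphism, and your final comparison step collapses. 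The paper circumvents exactly this by reversing the direction of the construction: it first builds a $G$-equivariant \emph{pseudo-rational} model upstairs --- in the good-reduction case the regular quotient $A_{S''}/\Gal(K''/K')$ of the N\'eron model coming from the cocycle description (Lemma~\ref{Lem:ActionOnAbelianScheme}), and in the elliptic-curve case the minimal desingularization $Y'$ of your $X_0'$, to which the $G$-action lifts by the universal property of minimal desingularizations of surfaces --- and only then passes to the quotient over $S$, where Boutot/Heitmann--Ma do apply. That is precisely the content of the two case hypotheses in the statement. Your averaging argument applies verbatim to $Y'\to Y'/G$ in place of $X_0'\to X$, so combining it with the paper's equivariant model would repair the proof and could replace the spectral-sequence argument.
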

\begin{proof}
Using Proposition~\ref{Prop:InvariancePseudoRational}, it suffices to show the claim for any proper model of $X_K$ with pseudo-rational singularities. We at first assume that $A_{K^\prime}$ has good reduction and take a Galois extension $K^{\prime \prime}$ over $K$ such that $K^\prime \subset K^{\prime \prime}$ and that $X_{K^\prime}$ splits over $K^{\prime \prime}$. Denote by $(\xi_\sigma)_{\sigma \in G_{K^{\prime \prime}}}$ the cocycle in $H^1(G_{K^{\prime \prime}},A_K(K^{\prime \prime}))$ corresponding to the torsor $X_K$, with $G_{K^{\prime \prime}} = \Gal(K^{\prime \prime}/K)$.  As in Lemma~\ref{Lem:ActionOnAbelianScheme}, we let $G_{K^{\prime \prime}}$ act on $A_{S^{\prime \prime}}$ via the automorphism $\xi_\sigma \sigma$ associated to $\sigma \in G_{K^{\prime \prime}}$. Set $X = A_{S^{\prime \prime}} / G_{K^{\prime \prime}}$ and $X^\prime = A_{S^{\prime \prime}} / \Gal(K^{\prime \prime}/K^{\prime})$. Then we saw that - as $A_{K^\prime}$ has good reduction - $X^\prime$ is a regular model of $X_{K^\prime}$. Moreover, the action of $G_{K^{\prime \prime}}$ on $A_{S^{\prime \prime}}$ induces an action of $G = G_{K^{\prime \prime}} / \Gal(K^{\prime \prime}/K^{\prime})$ on $X^\prime$ with $X = X^\prime / G$. By assumption, the order of $G$ is prime to the residue characteristic. Therefore, for a $G$-invariant open affine subset $\Spec(B) \subset X^\prime$, we have $B = B^G \oplus B^\prime$ as $B^G$-modules, and applying \cite{Boutot:SingularitesRationelles} in the equicharacteristic case or \cite{Heitmann+Ma:BigCohenMacaulayAlgebras}, Corollary~4.3 in the positive or mixed characteristic case shows that $\Spec(B^G)$ has pseudo-rational singularities. As $X$ is covered by such spectra, it has pseudo-rational singularities. As $\vert G \vert$ is prime to $p$, it is an invertible element in $H^0(X^\prime, \OO_{X^\prime})$. Hence, multiplying with $\vert G \vert$ induces an isomorphism on $H^j(X^\prime, \OO_{X^\prime})$, which leads to $H^i(G,H^j(X^\prime, \OO_{X^\prime})) = 0$ for $i \ge 1$. Therefore, the spectral sequence $\mathrm{II}_2^{r,s}$ in \cite{Grothendieck:Tohoku}, Th\'eor\`eme~5.2.2 degenerates. The same reasoning leads to the degeneration of $\mathrm{I}_2^{r,s}$ in the same theorem, from which we obtain $H^1(X, \OO_X) = H^1(X^\prime, \OO_{X^\prime})^G$. By Proposition~\ref{Prop:InvariancePseudoRational}, this group is invariant under choosing a pseudo-rational fibration with isomorphic generic fiber.

Now assume that $A_K$ is an elliptic curve. We let $X$ be the minimal regular fibration over $S$ with generic fiber $X_K$ and $Y^\prime \to X \times_S S^\prime$ be the minimal desingularization of the normalization of $X \times_S S^\prime$ over $S^\prime$. Then $G$ acts on $Y^\prime$ as follows: It naturally acts on $S^\prime$, and hence on the product $X \times_S S^\prime$ via its second factor. Now the universal properties of the normalization and the minimal desingularization induce a group action on the normalization and $Y^\prime$ respectively. Writing $Y = Y^\prime/G$, we apply the same reasoning as above to see that $Y$ has pseudo-rational singularities and that $H^1(Y, \OO_Y) = H^1(Y^\prime, \OO_{Y^\prime})^G$ holds. By Proposition~\ref{Prop:InvariancePseudoRational}, we have $H^1(Y^\prime, \OO_{Y^\prime}) = H^1(X^\prime, \OO_{X^\prime})$ and $H^1(Y, \OO_Y) = H^1(X, \OO_X)$ for any fibration $X$ over $S$ with pseudo-rational singularities.
\end{proof}

\begin{rem}
In the setting of Theorem~\ref{Thm:AdditiveReduction}, there exists a projective regular model of $X_K$, cf.\ \cite{Bellardini+Smeets:LogarithmicGoodReduction}, Corollary~1.3.
\end{rem}

\section{Computing the torsion in terms of higher ramification groups}

In this section, we recall a method which enables us to compute the torsion in $H^1(X,\OO_X)$ explicitly by means of \cite{Sen:AutoLocFields}, Theorem~2: We were reduced to compute the cohomology group $H^1(G,R^\prime)$ for a Galois extension $R^\prime / R$ of complete discrete valuation rings, where $G = \Aut(R^\prime / R) = \Gal(K^\prime / K)$. In this setting, it is fruitful to study the higher ramification groups
\[
G_i = \{\, \sigma \in G \mid  \sigma \text{ operates trivially on } R^\prime / \mm^{\prime i+1} \,\}.
\]
They form a descending filtration of $G = G_{-1}$ by normal subgroups that eventually stops, cf.\ \cite{Serre:LocalFields}, Chapter~IV, §1, Proposition~1. Moreover, the quotient $G_0 / G_1$ is cyclic of order prime to $p$ and the groups $G_i / G_{i+1}$ are direct products of elementary abelian $p$-groups, cf.\ \cite{Serre:LocalFields}, Chapter~IV, §2, Corollary~1-3. Note that in our setting, we have $G_0 = G$. Using the degeneration of the Hochschild--Serre spectral sequence, we obtain
\[
	H^1(G,R^\prime) = H^1(G_1, R^\prime)^{G/G_1}.
\]
Note that the extension $R^\prime / R^{\prime G_1}$ is called \emph{wildly ramified}. If $G_1$ is additionally cyclic of order $p^n$ with generator $\sigma$, Sen in \cite{Sen:AutoLocFields} defines the function
\[
i \colon \ZZ \longrightarrow \NN_{\ge 1}, \quad l \longmapsto \nu\left(\frac{\sigma^l(u^\prime)-u^\prime}{u^\prime}\right) = \nu(\sigma^l(u^\prime)-u^\prime)-1,
\]
where $u^\prime \in R^\prime$ is a uniformizer and $\nu$ is the valuation on $K^\prime$ defined by $R^\prime$. It measures for which $j \in \NN$ we have $\sigma^l \in G_j \setminus G_{j+1}$.

\begin{thm} \label{Thm:Sen}
Let $f \colon X \to S$ be an elliptic fibration with $k$ algebraically closed. Denote by $M = M_{X/K}$ the maximal base field and by $E_K$ the Jacobian of $X_K$. Assume that one of the following assumptions holds:
\begin{enumerate}
	\item $E_K$ is an elliptic curve with ordinary good reduction.
	\item $E_K$ is an elliptic curve with multiplicative reduction.
\end{enumerate}
Then the $p$-subgroup $G_1$ of the Galois group $G = \Gal(M/K)$ is cyclic and the torsion part $H^1(G,R^\prime) \subset H^1(X, \OO_X)$ is given by
\[
	H^1(G,R^\prime) \simeq \bigoplus_{l=1}^{p^n-1} R/\mm^{n_l} R,
\]
where $n_l =  \left\lfloor \frac{l + i(l)+(d-1)p^n}{dp^n}\right\rfloor$ with $\vert G_1 \vert = p^n$ and $\vert G/G_1 \vert = d$.
\end{thm}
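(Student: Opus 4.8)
The plan is to combine the structural results of the previous sections with Sen's explicit cohomology computation. Since $k$ is algebraically closed and $E_K$ has either ordinary good or multiplicative reduction, Theorem~\ref{Thm:TorsionGoodCase} respectively Theorem~\ref{Thm:TorsionMultCase} guarantees that $H^1(X^\prime, \OO_{X^\prime})$ is free, so Theorem~\ref{Thm:TorsionNormalFree} identifies the torsion of $H^1(X,\OO_X)$ with $H^1(G, R^\prime)$. Because $G = G_0$ and $G_0/G_1$ is cyclic of order $d$ prime to $p$, the degeneration of the Hochschild--Serre spectral sequence recorded in the preamble of this section reduces the computation to $H^1(G_1, R^\prime)$ together with its residual $G/G_1$-action, via $H^1(G,R^\prime) = H^1(G_1, R^\prime)^{G/G_1}$.

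Next I would establish that $G_1$ is cyclic. In the multiplicative case this is immediate: the proof of Theorem~\ref{Thm:TorsionMultCase} shows that $M = K^\prime$ is the cyclic extension of Liu--Lorenzini--Raynaud, so $G$, and a fortiori its subgroup $G_1$, is cyclic. In the ordinary good reduction case, Proposition~\ref{Prop:TorsorsAsQuotients} shows that $G = \Gal(M/K)$ is abelian and that $\sigma \mapsto r(\xi_\sigma)$ defines an injection of $G$ into $A(k)$, where $A$ is the abelian N\'eron model. Its $p$-Sylow subgroup $G_1$ — which coincides with the wild inertia, as $G_0/G_1$ has order prime to $p$ — therefore injects into the $p$-primary torsion $A(k)[p^\infty]$. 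As $A$ has ordinary reduction and $k$ is algebraically closed, $A(k)[p^\infty] \cong \mathbb{Q}_p/\mathbb{Z}_p$, all of whose finite subgroups are cyclic; hence $G_1$ is cyclic, say generated by $\sigma$ of order $p^n$.

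With $G_1 = \langle\sigma\rangle$ cyclic, I would apply Sen's Theorem~2 to the totally wildly ramified extension $M/M^{G_1}$. Writing $R_t = (R^\prime)^{G_1}$ for the ring of integers of the tame subextension $M^{G_1}$, Sen expresses $H^1(G_1, R^\prime)$ as a direct sum $\bigoplus_{l=1}^{p^n-1} R_t/\mm_t^{c_l}$ of cyclic $R_t$-modules whose lengths $c_l$ are dictated by the ramification function $i(l) = \nu(\sigma^l(u^\prime) - u^\prime) - 1$, specializing to $c_l = \lfloor (l+i(l))/p^n\rfloor$ in the untwisted case $d=1$ (which already yields the theorem when $G = G_1$). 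It remains to take $G/G_1$-invariants. Since $M^{G_1}/K$ is tamely ramified of degree $d$, the quotient $G/G_1$ acts on $R_t = \bigoplus_{j=0}^{d-1} R\,u_t^{\,j}$ through the tame character and on the $l$-th summand through a fixed power of it; extracting the invariant part of each summand and re-expressing it over $R$ — using $\mm R_t = \mm_t^{\,d}$, so that passing from $\mm_t$-length to $\mm$-length divides by $d$ — turns $R_t/\mm_t^{c_l}$ into $R/\mm^{n_l}$, with the target value $n_l = \lfloor \frac{l + i(l) + (d-1)p^n}{dp^n}\rfloor$ arising from the elementary identity $\lceil x/d\rceil = \lfloor (x+d-1)/d\rfloor$ once $c_l$ is unwound.

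The cyclicity of $G_1$ in the ordinary case is the key conceptual input, but it follows cleanly from the embedding into $\mathbb{Q}_p/\mathbb{Z}_p$. I expect the genuine obstacle to be the final matching step: one must pin down exactly which power of the tame character acts on each graded piece of Sen's decomposition and verify that taking invariants, combined with the change of valuation normalization (ramification index $dp^n$ of $R^\prime/R$), reproduces the floor function $n_l$ precisely, rather than up to an off-by-one shift.
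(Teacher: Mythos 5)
Your proposal has the same skeleton as the paper's proof: identify the torsion with $H^1(G,R^\prime)$ via Theorems~\ref{Thm:TorsionGoodCase}, \ref{Thm:TorsionMultCase} and \ref{Thm:TorsionNormalFree}, reduce to $H^1(G_1,R^\prime)^{G/G_1}$ by Hochschild--Serre, prove that $G_1$ is cyclic, and apply Sen's theorem. Your cyclicity argument in the ordinary case is a genuine (and slightly more economical) variant: you embed $G=\Gal(M/K)$ into $A(k)$ by $\sigma\mapsto r(\xi_\sigma)$ --- which is exactly what Proposition~\ref{Prop:TorsorsAsQuotients} and its proof furnish, the map being a homomorphism because $\sigma$ restricts trivially to the closed fiber --- and then use that $A(k)[p^\infty]\simeq\QQ_p/\ZZ_p$ for an ordinary elliptic curve over an algebraically closed field, so the $p$-group $G_1$ lands in a group all of whose finite subgroups are cyclic. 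The paper instead passes to $X_{M^{G_1}}$ to reduce to $G=G_1$, uses Lemma~\ref{Lem:TorsorComesFromLeft} to see that the torsor class has $p$-power order, and invokes Raynaud's Th\'eor\`eme~9.4.1 from \cite{Raynaud1970} to identify $H^1(G_{K^{\sep}},A(k))_{p^\infty}$ with $\Hom(G_{K^{\sep}},\QQ_p/\ZZ_p)$. Both routes end in ``finite subgroup of $\QQ_p/\ZZ_p$, hence cyclic'', and yours avoids the appeal to Raynaud at this particular step.

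The one genuine gap is precisely where you flag it: the passage from Sen's description $H^1(G_1,R^\prime)\simeq\bigoplus_{l=1}^{p^n-1}R_t/\mm_t^{\tilde n_l}$, with $R_t=(R^\prime)^{G_1}$, $\mm_t$ its maximal ideal and $\tilde n_l=\lfloor (l+i(l))/p^n\rfloor$, to its $G/G_1$-invariants. Your arithmetic is correct: the paper's $n_l=1+\lfloor(\tilde n_l-1)/d\rfloor$ equals $\lceil\tilde n_l/d\rceil=\lfloor(l+i(l)+(d-1)p^n)/(dp^n)\rfloor$, and for the plain semilinear action one indeed gets $(R_t/\mm_t^{\tilde n_l})^{G/G_1}\simeq R/\mm^{\lceil\tilde n_l/d\rceil}$ by decomposing $R_t=\bigoplus_{j=0}^{d-1}R\,u_t^{\,j}$ into eigenspaces of the tame character. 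But this summand-by-summand computation presupposes that Sen's (a priori non-canonical) isomorphism can be arranged $G/G_1$-equivariantly with each summand carrying the untwisted Galois action; if some summand were twisted by a nontrivial power of the tame character, its invariants would be a different eigenspace and the length would shift --- exactly the off-by-one danger you yourself name, and which you leave unresolved. The paper closes this step by citing Lemma~1.5 of \cite{Kock:GaloisStructure}, which delivers the identification $H^1(G,R^\prime)=H^1(G_1,R^\prime)^{G/G_1}\simeq\bigoplus_l R/\mm^{n_l}$ with the stated $n_l$. So your proof becomes complete only once you either verify the equivariance/eigenspace claim directly or import K\"ock's lemma as the paper does.
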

\begin{proof}
We already know from \cite{Liu+Lorenzini+Raynaud}, Proposition~8.3, and the proof of Theorem~\ref{Thm:TorsionMultCase} that $M/K$ is cyclic in the second case. Thus, it remains to check that $G_1$ is cyclic in the first case of the statement. To this end, we may assume $G = G_1$ by considering $X_{M^{G_1}}$. It is an element of $p$-power order in $H^1(G_{K^{\sep}},A(K^{\sep}))$ as discussed in the proof of Lemma~\ref{Lem:TorsorComesFromLeft}, that is, it is contained in $H^1(G_{K^{\sep}},A(K^{\sep}))_{p^\infty}$. Now $M$ is exactly the splitting field of the image of $X_K$ in $H^1(G_{K^{\sep}},A(k))_{p^\infty}$. By \cite{Raynaud1970}, Th\'eor\`eme~9.4.1, $H^1(G_{K^{\sep}},A(k))_{p^\infty}$ is isomorphic to $\Hom(G_{K^{\sep}},\QQ_p / \ZZ_p)$. Let $\varphi \colon G_{K^{\sep}} \to \QQ_p / \ZZ_p$ be the map corresponding to the image of $X_K$. Then $G = G_{K^{\sep}} / \ker \varphi$, a subgroup of $\QQ_p / \ZZ_p$, hence cyclic.

We are now in the position to apply \cite{Sen:AutoLocFields}, Theorem~2: Let $\sigma$ be a generator of the cyclic $p$-group $G_1$ of order $p^n$ and $i$ be the function defined above in the statement. By loc.cit, we then have
\[
H^1(G_1,R^\prime) \simeq \bigoplus_{l=1}^{p^n-1} R^{\prime G_1}/\mm^{\tilde{n}_l} R^{\prime G_1},
\]
where $\tilde{n}_l = \left\lfloor \frac{l + i(l)}{p^n}\right\rfloor$. On the other hand, because $\vert G/G_1 \vert$ is prime to $p$, we already saw that $H^1(G,R^\prime) = H^1(G_1,R^\prime)^{G/G_1}$. Using \cite{Kock:GaloisStructure}, Lemma~1.5, we obtain
\[
H^1(G,R^\prime) \simeq \bigoplus_{l=1}^{p^n-1} R/\mm^{n_l} R,
\]
where $n_l = 1+ \left\lfloor \frac{\tilde{n}_l-1}{d} \right\rfloor = \left\lfloor \frac{l + i(l)+(d-1)p^n}{dp^n}\right\rfloor$.
\end{proof}


\begin{ex}
Resuming Example~\ref{Ex:ExampleKU}, we observe that $G$ is cyclic and it has its only ramification jump at $m$, i.e.\ we have $G = G_m \supsetneq G_{m+1} = \{1\}$ (cf.\ \cite{Serre:LocalFields}, Chapter~IV, §2, Exercise~5). In particular, writing $m = dp + b$ for $d \ge 0$ and $1 \le b \le p-1$, we deduce from Theorem~\ref{Thm:Sen}
\[
	H^1(G, R^\prime) = \big(k[[t]]/(t^{d+1})\big)^{\oplus d} \oplus \big(k[[t]]/(t^{d})\big)^{\oplus p-b-1}.
\]
\end{ex}

\bibliographystyle{leif3}
\bibliography{Literatur}

\end{document}